\tikzstyle{vertex}=[circle, draw, inner sep=0pt, minimum size=6pt]
\newcommand{\vertex}{\node[vertex]}
\newtheorem{theorem}{Theorem}[section]
\patchcmd{\ttlh@hang}{\parindent\z@}{\parindent\z@\leavevmode}{}{}
\patchcmd{\ttlh@hang}{\noindent}{}{}{}
\titleformat*{\section}{\large\bfseries}
\titleformat*{\subsection}{\small\bfseries}
\titleformat*{\subsubsection}{\large\bfseries}
\titleformat*{\paragraph}{\large\bfseries}
\titleformat*{\subparagraph}{\large\bfseries}
\newcommand{\N}{\mathbb{N}}
\newcommand{\R}{\mathbb{R}}
\newtheorem{lemma}[theorem]{Lemma}
\newtheorem{corollary}[theorem]{Corollary}
\begin{document}
	
	\begin{center}
		{\huge Uniqueness and Non-Uniqueness for Spin-Glass Ground States on Trees}\\\vspace{0.4cm}
		Johannes Bäumler\footnote{\textsc{Department of Mathematics, TU Munich}. E-Mail: \href{mailto:johannes.baeumler@tum.de}{johannes.baeumler@tum.de}}
	\end{center}
	
		\noindent
		\textbf{Abstract.} We consider a spin glass at temperature $T = 0$ where the underlying graph is a locally finite tree. We prove for a wide range of coupling distributions that uniqueness of ground states is equivalent to the maximal flow from any vertex to $\infty$ (where each edge $e$ has capacity $|J_{e}|$) being equal to zero which is equivalent to recurrence of the simple random walk on the tree.
	
	\tableofcontents
	
	\vspace{1cm}

\section{Introduction and Definitions}
Let $G = (V,E)$ be a locally finite graph, for a given finite set $B \subset V$ define $E(B)$ as the set of edges with at least one end in $B$. For any finite set $B \subset V$, $\sigma \in \left\{-1,+1\right\}^V$ and $J_{B} \coloneqq (J_{xy}, (x,y)\in E(B))$ define
\begin{equation}\label{ham}
H_{B,J}= \space -\sum_{(x,y) \in E(B)}J_{xy}\sigma_{x}\sigma_{y} \ .
\end{equation}
In the Edwards-Anderson spin glass model \cite{edwards1975theory} one considers nearest neighbor interactions and the case where the $J_{xy}$$'s$, also called the {\sl couplings}, are i.i.d. random variables. The distribution of a single coupling will be denoted by $\nu$, throughout we assume that $\nu(\{0\})=0$. With $\nu^{E}$ we denote the distribution of $(J_{xy}, (x,y)\in E)$.\newline
We call an edge $e=(x,y)\in E$ {\sl satisfied} for a configuration $\sigma$ if $J_{xy}\sigma_{x}\sigma_{y}>0$, if $J_{xy}\sigma_{x}\sigma_{y}<0$ we call it {\sl unsatisfied}. Note that $\nu^{E}( \exists  e \in E $ such that $ J_{xy}=0)=0$. \newline
Ground states are local minima of the Hamiltonian defined in (\ref{ham}), i.e. the Hamiltonian (\ref{ham}) can not be lowered by flipping the spins for some finite $B \subset V$. This means that $\sigma$ is a ground state if and only if for any finite set $B \subset V$
\begin{equation}\label{gs}
\sum_{(x,y) \in \partial B}J_{xy}\sigma_{x}\sigma_{y} \geq 0
\end{equation}
where $\partial B$ denotes the set of edges with exactly one end in $B$, see also \cite{bolthausen2007spin} or \cite{newman2012topics} for a more general introduction to spin glasses.
We denote the set of ground states of $G$ with couplings $J$ by $\mathcal{G}(J)$. 
The main goal of this paper is to determine the cardinality of $\mathcal{G}(J)$, when the underlying graph $G = (V,E)$ is a tree and $J$ is distributed according to some distribution satisfying $\nu((-\epsilon,\epsilon))=\Theta(\epsilon)$ for $\epsilon \rightarrow 0$; that means there exist $0<c<C<\infty$ such that $c\cdot\epsilon\leq\nu((-\epsilon,\epsilon))\leq C\cdot\epsilon$ for all $\epsilon$ small enough. A distribution $\nu$ satisfying this will also be called a {\sl distribution of linear growth}.
The existence of ground states is a consequence of compactness of the space $\left\{-1,+1\right\}^{V}$. Clearly $\sigma$ is a ground state if and only if $-\sigma$ is a ground state, hence $|\mathcal{G}(J)| $ is even (or infinity) and greater or equal than 2. For trees there are two {\sl natural  ground  states}, namely the ones satisfying $ J_{xy}\sigma_{x}\sigma_{y} \geq 0$ for every $(x,y)\in E$. In Theorem \ref{T1} in Section 2 we will see a necessary and sufficient condition which ensures that the two natural ground states are the only ground states. There are already many results about the behavior of spin glasses on trees \cite{carlson1988critical,carlson1990bethe,gandolfo2017glassy} and its ground states \cite{tessler2010geometry}, but most of them are limited to trees with a high regularity, for example the Bethe Lattice.\\

For a tree $T = (V,E)$ we choose one vertex and call it the root of the tree or simply 0. All concepts presented in the following are independent of the specific choice of the root. For $x \in V$ let $|x|$ be the length of the shortest (and hence only non intersecting) path starting from zero and ending in $x$. For $e = (x,y) \in E $ $|e| \coloneqq \min \ \left\{ |x|,|y| \right\}$.\newline
By $x \preceq y$ we mean that $x$ is part of every path connecting 0 and $y$. This directly implies that $|x|\leq|y|$. Note that $0 \preceq y$ $\forall y \in V$. We say that $x\rightarrow y$ if $x \preceq y$ and $|x| = |y| - 1$, i.e. $(x,y)\in E$ and $x$ is the vertex located closer to the root.\newline
For $x \in V$ we define the subgraph $T_{\succeq x}$ by the tree containing all vertices $y$, s.t. $x \preceq y$ and all edges of the form $(u,v)$, s.t. $(u,v) \in E, x \preceq u$ and $x\preceq v$.\newline
For $n \in \N$ we define $T_{\leq n} = (V_{\leq n},E_{\leq n}) $ by the tree containing all vertices $x$ s.t. $|x|\leq n$ and and edges $e$ s.t. $|e| \leq n - 1$. By $T_{\geq n} = (V_{\geq n},E_{\geq n})$ we mean the forest containing all vertices $x$ s.t. $|x|\geq n$ and and edges $e$ s.t. $|e| \ge n$.\newline
Throughout we will assume that all edges are oriented towards infinity, i.e. that $(x,y) \in E$ implies $x \preceq y$.\newline
For some edge $e=(x,y)$, we denote the shortest path connecting the root 0 to $y$ by $\mathcal{P}_{e}$.\\

A subset $\Pi \subset E$ is called a {\sl cutset} separating $x$ and infinity if every infinite non self-intersecting path starting at $x$ contains at least one edge in $\Pi$. If we do not mention a specific vertex $x$, we always mean that the cutset separates 0 from $\infty$. For a function $g : \left\{F \subset E : |F|<\infty \right\} \rightarrow \R$ we set
\begin{equation*}
\liminf_{\Pi \rightarrow \infty} g(\Pi) \coloneqq \lim_{n \rightarrow \infty}
\inf \left\{g(\Pi) : \Pi \ \mathrm{ cutset} \wedge \Pi \subset E_{\geq n}\right\} \ .
\end{equation*}

\section{The main theorem}    

The main goal of this section is to prove the following theorem:

\begin{theorem}\label{T1}
	Let $\nu$ be a distribution of linear growth. Then the following are equivalent: \newline
	i) The natural ground states are the only ground states $\nu^{E}$-a.s. \newline
	ii) inf $ \{ \  \sum_{e\in \Pi} |J_{e}|  \ : \ \Pi $ cutset separating 0 and $ \infty \} = 0$ $\nu^{E}$-a.s.\newline
	iii) MaxFlow$(0 \rightarrow \infty, \langle |J_{e}| \rangle) = 0 \ \nu^{E}$-a.s.\newline
	iv) The simple random walk on T = (V,E) is recurrent
\end{theorem}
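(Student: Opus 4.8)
The plan is to show that all four statements are equivalent to (ii), establishing in turn $\text{(ii)}\Leftrightarrow\text{(iii)}$, $\text{(ii)}\Leftrightarrow\text{(iv)}$, and $\text{(ii)}\Leftrightarrow\text{(i)}$; only the last group of implications involves the Hamiltonian, while $\text{(ii)}\Leftrightarrow\text{(iv)}$ is a statement about trees with random edge weights alone. The equivalence $\text{(ii)}\Leftrightarrow\text{(iii)}$ holds for \emph{every} realization of $J$: it is the max-flow--min-cut theorem for the locally finite network $T$ with capacities $|J_e|$. I would apply the finite Ford--Fulkerson theorem to the truncations $T_{\le n}$, use monotonicity of the truncated min-cuts and local finiteness (a König-type compactness argument for the flows) to pass to the limit, and note that on a tree the infimum over cutsets equals $0$ iff $\liminf_{\Pi\to\infty}$ of the cutset capacities equals $0$.

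For $\text{(ii)}\Leftrightarrow\text{(iv)}$, write $I_v$ for the min-cut capacity separating $v$ from $\infty$ inside $T_{\succeq v}$, so that $I_v=\sum_{v\to w}\min(|J_{(v,w)}|,I_w)$, and recall that the simple random walk on $T$ is transient iff the effective resistance $\mathcal R(0\to\infty)$ of $T$ with \emph{unit} resistances is finite, i.e.\ iff $T$ carries a unit flow of finite energy. One inequality is Cauchy--Schwarz: for a unit flow $\theta$ and a cutset $\Pi$, $1\le\sum_{e\in\Pi}\theta(e)\le(\sum_{e\in\Pi}\theta(e)^2/|J_e|)^{1/2}(\sum_{e\in\Pi}|J_e|)^{1/2}$, so $I_0\ge\mathcal R(0\to\infty;\,r_e=1/|J_e|)^{-1}$. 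It then suffices to show that, because $\nu$ has linear growth, the network with i.i.d.\ resistances $1/|J_e|$ is $\nu^E$-a.s.\ transient precisely when $T$ is. The role of $\nu((-\epsilon,\epsilon))=\Theta(\epsilon)$ is that small-capacity edges occur with exactly the density needed to push only an $O(|J_e|)$ amount of current through an edge of capacity $|J_e|$; this upgrades a fixed finite-energy unit flow on $T$ to one with $\sum_e\theta(e)^2/|J_e|<\infty$ $\nu^E$-a.s.\ (choosing the flow after seeing $J$). Conversely, if $T$ is recurrent one must exhibit, $\nu^E$-a.s., cutsets $\Pi_k\subset E_{\ge k}$ with $\sum_{e\in\Pi_k}|J_e|\to0$: on the half-line this is Borel--Cantelli ($\liminf_n|J_{e_n}|=0$), and in general it is a second-moment/Borel--Cantelli argument exploiting that a recurrent tree is ``thin at infinity''. \textbf{This comparison is the main obstacle.} The naive argument with a fixed flow fails — one can have $\sum_e\theta(e)^2<\infty$ but $\mathbb{E}[1/|J_e|]=\infty$ — and on trees of branching number $1$ one cannot just delete all small-capacity edges without disconnecting $0$ from $\infty$, so the flow must use them in a controlled way; carrying this out, together with the implicit $0$--$1$ law for the event in (ii), is the technical heart of the theorem. (If the analogous dichotomy for i.i.d.\ linear-growth capacities is available in the literature, it may simply be quoted.)

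For $\text{(ii)}\Rightarrow\text{(i)}$: I first claim that $I_0=0$ forces $I_v=0$ for \emph{every} $v$, $\nu^E$-a.s. Given cutsets $\Pi^{(k)}$ of $T$ with $\sum_{e\in\Pi^{(k)}}|J_e|\to0$, either some $\Pi^{(k)}$ avoids the finite geodesic from $0$ to $v$ — and then $\Pi^{(k)}\cap E(T_{\succeq v})$ is a cutset separating $v$ from $\infty$ of capacity $\le\sum_{e\in\Pi^{(k)}}|J_e|$ — or every $\Pi^{(k)}$ meets this geodesic, which forces one of its finitely many edges to have capacity $<\epsilon_k\to0$, a $\nu^E$-null event. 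Now let $\sigma$ be any ground state and suppose some edge $e_0=(x_0,y_0)$ is unsatisfied. Since $I_{y_0}=0$, choose a minimal cutset $\Pi$ separating $y_0$ from $\infty$ inside $T_{\succeq y_0}$ with $\sum_{e\in\Pi}|J_e|<|J_{e_0}|$; the set $B$ of vertices reachable from $y_0$ in $T_{\succeq y_0}$ without crossing $\Pi$ is finite by König's lemma, and $\partial B=\Pi\cup\{e_0\}$. Then (\ref{gs}) applied to $B$ gives $-|J_{e_0}|+\sum_{(x,y)\in\Pi}J_{xy}\sigma_x\sigma_y\ge0$, whereas the left-hand side is at most $-|J_{e_0}|+\sum_{e\in\Pi}|J_e|<0$ — a contradiction. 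Hence $J_{xy}\sigma_x\sigma_y\ge0$ on every edge, i.e.\ $\sigma$ is natural.

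For $\text{(i)}\Rightarrow\text{(ii)}$ I argue contrapositively: assuming $\nu^E(I_0>0)>0$ I construct, on $\{I_0>0\}$, a non-natural ground state. First, $\{I_0>0\}$ forces $T$ to have at least two ends (a one-ended tree has a single-edge cutset $\{(v_n,v_{n+1})\}$ at every level along its spine, hence $I_0\le\inf_n|J_{(v_n,v_{n+1})}|=0$ $\nu^E$-a.s.). Using the recursion for $I_v$ one then shows that, $\nu^E$-a.s.\ on $\{I_0>0\}$, there is an edge $e_0=(x_0,y_0)$ with $|J_{e_0}|\le I_{y_0}$ and with $|J_{e_0}|$ also at most the min-cut separating $x_0$ from $\infty$ in $T\setminus T_{\succeq y_0}$ — otherwise the recursion would yield $I_0=\sum_{|v|=n}I_v$ for all $n$, excluded $\nu^E$-a.s.\ by the same Borel--Cantelli mechanism as in the recurrent case. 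Flipping a natural ground state on all of $T_{\succeq y_0}$ gives a configuration whose only unsatisfied edge is $e_0$; the ground-state inequality (\ref{gs}) for a finite $B$ then reduces, separately for $B$ on each side of $e_0$, to $\sum_{e\in\partial B\setminus\{e_0\}}|J_e|\ge|J_{e_0}|$, which holds by the two properties of $e_0$ (again because the relevant part of $\partial B$ contains a cutset whose capacity is bounded below by one of these min-cuts). So this configuration is a non-natural ground state, and (i) fails.
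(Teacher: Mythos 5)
Your treatment of $(i)\Leftrightarrow(ii)$ follows essentially the same route as the paper: flipping a finite region bounded by a cheap cutset beyond an unsatisfied edge for $(ii)\Rightarrow(i)$, and for $(i)\Rightarrow(ii)$ locating a ``doubly bottleneck'' edge $h$ whose capacity is dominated both by the min-cut beyond it and by the min-cut on the other side of the tree, then taking $h$ to be the unique unsatisfied edge. Your existence argument for $h$ is only sketched (``otherwise the recursion would yield $I_0=\sum_{|v|=n}I_v$, excluded by Borel--Cantelli''), whereas the paper's Lemma \ref{L2} makes this precise using the lower bound $\nu((-\epsilon,\epsilon))\geq c\epsilon$ together with $\sum_f \mathbb{P}(|J_f|<L_f)=\infty$ and independence; but the idea is the same and could be completed along your lines.

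The genuine gap is $(ii)\Leftrightarrow(iv)$ (equivalently $(iii)\Leftrightarrow(iv)$), which you yourself flag as ``the main obstacle'' and ``the technical heart'' and then do not carry out, suggesting it might be quoted from the literature. This equivalence is precisely where the paper does real work: it invokes the Lyons--Pemantle--Peres resistance bounds (Theorem \ref{T4}), $\mathrm{Conduct}(0\to Z,\langle c_e\rangle)\leq\mathbb{E}[\mathrm{MaxFlow}(0\to Z,\langle\kappa_e\rangle)]\leq 2\,\mathrm{Conduct}(0\to Z,\langle c_e\rangle)$ for independent exponential capacities on trees, passes to the limit $Z=V_n\to\infty$ by dominated convergence (Corollary \ref{C6}), and then transfers from exponential to arbitrary linear-growth couplings by a quantile-function coupling: $Q(I_e)\stackrel{d}{=}|J_e|\wedge 1$ with $\lambda_1 I_e\leq Q(I_e)\leq\lambda_2 I_e$, combined with the scaling and truncation properties of Lemma \ref{L9} (Theorem \ref{T7}). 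Moreover, the intermediate object you propose --- transience of the network with resistances $1/|J_e|$ --- is not equivalent to $\mathrm{MaxFlow}>0$ even deterministically (on a half-line with capacities bounded below, every single-edge cutset has positive capacity yet $\sum_e 1/|J_e|=\infty$), and $\mathbb{E}[1/|J_e|]=\infty$ for linear-growth $\nu$ blocks the naive first-moment comparison, as you note. So the reduction you outline does not obviously close, and without the LPP-type two-sided bound (or an equivalent substitute) the implication $(iv)\Rightarrow(iii)$ in particular --- that recurrence forces the max flow to vanish for \emph{every} linear-growth distribution --- is unproven in your write-up.
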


The equivalence of $i)$ and $ii)$ will be proven in section \ref{2.1}, the equivalence of $ii)$ and $iii)$ is just a well known extension of the Max-Flow Min-Cut - Theorem of L.R. Ford and D.R. Fulkerson \cite{ford2015flows}. In the sections \ref{2.2} and \ref{2.3} we will show the equivalence of $iii)$ and $iv)$. In sections \ref{2.1}-\ref{2.3} we will for technical reasons assume that $T$ is a tree without finite branches, that means that $T_{\succeq x}$ is infinite for every $x \in V$. In section \ref{2.4} we will prove that this assumption is in fact not necessary. 

\subsection{Spin Glasses, Cutsets and Flows}\label{2.1}

\begin{figure}
	\[\begin{tikzpicture}[scale = 1.2,- ,>= latex ]
	\vertex[fill] (0) at (0,0) [label=below:$0$] {};
	
	\vertex (01) at (-0.6,1) [line width=0.75pt] {};
	\vertex[fill] (02) at (1.2,1) {};
	
	\vertex (011) at (-1.6,2) [line width=0.75pt] {};
	\vertex[fill] (012) at (-0.6,2) {};
	\vertex (013) at (0.1,2) [line width=0.75pt] {};
	\vertex[fill] (021) at (0.9,2) {};
	\vertex[fill] (022) at (1.5,2) {};
	
	\vertex[fill] (0111) at (-1.9,3) {};
	\vertex[fill] (0112) at (-1.3,3) {};
	\vertex[fill] (0121) at (-0.6,3) {};
	\vertex[fill] (0131) at (0.1,3) {};
	\vertex[fill] (0211) at (0.9,3) {};
	\vertex[fill] (0221) at (1.4,3) {};
	\vertex[fill] (0222) at (1.9,3) {};

	\path
	
	(0) edge [line width=1.5pt] node[left=0] {$e$} (01)
	(0) edge (02)
	
	(01) edge (011)
	(01) edge [dashed, line width=1.3pt] (012)
	(01) edge (013)
	(02) edge (021)
	(02) edge [dashed] (022)
	
	(011) edge [dashed, line width=1.3pt] (0111)
	(011) edge [dashed, line width=1.3pt] (0112)
	(012) edge (0121)
	(013) edge [dashed, line width=1.3pt] (0131)
	(021) edge [dashed] (0211)
	(022) edge (0221)
	(022) edge (0222)

	;
	\end{tikzpicture}\]
	\centering
	\caption{$e$ is bold, $\Pi$ dashed, $\Pi_{\succeq e}$ bold and dashed, the set $B \subset V$ are the blank vertices.}
	\label{Fig1}
	
\end{figure}

A function $\theta : E \rightarrow \R_{\ge 0}$ is called a {\sl flow from $0$ to $\infty$}, or just {\sl flow}, if for all $y \in V\setminus \{$0$\}$
\begin{equation*}
\sum_{\{x \in V : x \rightarrow y\}}\theta ((x,y)) = \sum_{\{z \in V : y \rightarrow z\}}\theta ((y,z)) \ .
\end{equation*}
$\theta$ is a flow with respect to the capacities $\kappa_{e}$ if additionally
\begin{equation*}
\theta(e) \leq \kappa_{e}
\end{equation*}
holds for all $e \in E$. Most of the time we will deal with the case where the capacities are $|J_{e}|$. For a flow $\theta$ we define the {\sl strength} of the flow as
\begin{equation*}
\mbox{strength}(\theta) \coloneqq \sum_{\{x \in V : 0 \rightarrow x\}}\theta ((0,x)) \ .
\end{equation*}
For a given set of nonnegative values $(\kappa_{e})_{e \in E}$ we define the {\sl maximal flow} from 0 to $\infty$ as
\begin{equation*}
\mbox{MaxFlow} (0 \rightarrow \infty, \langle \kappa_{e} \rangle) \coloneqq
\max\{\mbox{strength}(\theta) : \theta \ \mathrm{ is \ a \ flow \ w.r.t. \ } \kappa_{e} \} \ .
\end{equation*}
Remember that due to the enhanced version of the Max-Flow Min-Cut Theorem, see for example (\cite{lyons2017probability}, Chapter 3)
\begin{equation}\label{MFMC}
\mbox{MaxFlow} (0 \rightarrow \infty, \langle |J_{e}| \rangle) = \inf\left\{ \sum_{e\in \Pi} |J_{e}| : \Pi  \mathrm{ \ cutset \ separating \ 0 \ and \ }  \infty \right\} \ .
\end{equation}
\noindent
With this, we are ready to prove the equivalence of $i)$ and $ii)$ of Theorem \ref{T1}.
\begin{proof}
	$ii) \Rightarrow i)$: Let $\sigma \in \left\{-1,+1\right\}^{V}$ such that there exists some edge $e = (x,y) \in E$ s.t. $J_{e}\sigma_{x}\sigma_{y} < 0$. Take a cutset $\Pi$ such that $\sum_{f \in \Pi}|J_{f}| < |J_{e}|$ and $e$ lies on the same side as the root, a picture of such a situation is given in Figure \ref{Fig1}. Such a cutset $\Pi$ exists almost surely by the assumption $ii)$. Define $\Pi_{\succeq e}$ as all elements $f\in \Pi$ satisfying $f \succeq e$. Now let $B \subset V$ be the set of all vertices which are separated from infinity by $\left\{e\right\}\cup \Pi_{\succeq e}$. Then
	\begin{equation*}
	\sum_{(x,y) \in \partial B}J_{xy}\sigma_{x}\sigma_{y} = \sum_{(x,y) \in \left\{e\right\}\cup \Pi_{\succeq e}}J_{xy}\sigma_{x}\sigma_{y} < 0 \ .
	\end{equation*}
	
	\begin{figure}
		\begin{center}

			\begin{subfigure}{.3\linewidth}
				\[\begin{tikzpicture}[scale = 1., y={1cm/1.2}]
				\vertex (0) at (0,0) [line width=0.75pt, label=below:$0$] {};
				
				\vertex (11) at (-0.75,1) [line width=0.75pt, label=right:$u$] {};
				\vertex (12) at (0.6,1) [line width=0.75pt] {};
				
				\vertex (21) at (-1.2,2) [line width=0.75pt, label=left:$v$] {};
				\vertex[fill] (22) at (1.2,2) {};
				
				\vertex[fill] (31) at (-1.6,3) {};
				\vertex (32) at (-0.8,3) [line width=0.75pt] {};
				\vertex[fill] (33) at (0.6,3) {};
				\vertex[fill] (34) at (1.6,3) {};
				
				\vertex[fill] (41) at (-1.6,4) {};
				\vertex[fill] (42) at (-0.8,4) {};
				\vertex[fill] (43) at (0.5,4) {};
				\vertex[fill] (44) at (1.2,4) {};
				\vertex[fill] (45) at (1.8,4) {};
				
				\vertex[draw=none] (i1) at (0.1,0.4) {};
				\vertex[draw=none] (i2) at (-0.5,4.4) {};
				
				\path
				(0) edge (11)
				(0) edge (12)
				
				(11) edge node[left=0] {$h$} (21)
				(12) edge [dashed] (22)
				
				(21) edge [dashed] (31)
				(21) edge (32)
				(22) edge (33)
				(22) edge (34)
				
				(31) edge (41)
				(32) edge [dashed] (42)
				(33) edge (43)
				(33) edge (44)
				(34) edge (45)
				
				(i1) edge[draw=lightgray, bend right=5] node[left] {\textcolor{gray}{$T_2$}} node[right] {\textcolor{gray}{$T_1$}} (i2)
				;
				\end{tikzpicture}\]
			\end{subfigure}
			\begin{subfigure}{.3\linewidth}
				\[\begin{tikzpicture}[scale = 1., y={1cm/1.2}]
				\vertex (0) at (0,0) [line width=0.75pt, label=below:$0$] {};
				
				\vertex (11) at (-0.75,1) [line width=0.75pt, label=right:$u$] {};
				\vertex (12) at (0.6,1) [line width=0.75pt] {};
				
				\vertex[fill] (21) at (-1.2,2) [label=left:$v$] {};
				\vertex (22) at (1.2,2) [line width=0.75pt] {};
				
				\vertex[fill] (31) at (-1.6,3) {};
				\vertex[fill] (32) at (-0.8,3) {};
				\vertex[fill] (33) at (0.6,3) {};
				\vertex (34) at (1.6,3) [line width=0.75pt] {};
				
				\vertex[fill] (41) at (-1.6,4) {};
				\vertex[fill] (42) at (-0.8,4) {};
				\vertex[fill] (43) at (0.5,4) {};
				\vertex[fill] (44) at (1.2,4) {};
				\vertex[fill] (45) at (1.8,4) {};
				
				\vertex[draw=none] (i1) at (0.1,0.4) {};
				\vertex[draw=none] (i2) at (-0.5,4.4) {};
				
				\path
				(0) edge (11)
				(0) edge (12)
				
				(11) edge [dashed] node[left=0] {$h$} (21)
				(12) edge (22)
				
				(21) edge (31)
				(21) edge (32)
				(22) edge [dashed] (33)
				(22) edge (34)
				
				(31) edge (41)
				(32) edge (42)
				(33) edge (43)
				(33) edge (44)
				(34) edge [dashed] (45)
				
				(i1) edge[draw=lightgray, bend right=5] node[left] {\textcolor{gray}{$T_2$}} node[right] {\textcolor{gray}{$T_1$}} (i2)
				;
				\end{tikzpicture}\]
			\end{subfigure}
			\begin{subfigure}{.3\linewidth}
				\[\begin{tikzpicture}[scale = 1., y={1cm/1.2}]
				\vertex[fill] (0) at (0,0) [label=below:$0$] {};
				
				\vertex[fill] (11) at (-0.75,1) [label=right:$u$] {};
				\vertex[fill] (12) at (0.6,1) {};
				
				\vertex (21) at (-1.2,2) [line width=0.75pt, label=left:$v$] {};
				\vertex[fill] (22) at (1.2,2) {};
				
				\vertex[fill] (31) at (-1.6,3) {};
				\vertex (32) at (-0.8,3) [line width=0.75pt] {};
				\vertex[fill] (33) at (0.6,3) {};
				\vertex[fill] (34) at (1.6,3) {};
				
				\vertex[fill] (41) at (-1.6,4) {};
				\vertex[fill] (42) at (-0.8,4) {};
				\vertex[fill] (43) at (0.5,4) {};
				\vertex[fill] (44) at (1.2,4) {};
				\vertex[fill] (45) at (1.8,4) {};
				
				\vertex[draw=none] (i1) at (0.1,0.4) {};
				\vertex[draw=none] (i2) at (-0.5,4.4) {};
				
				\path
				(0) edge (11)
				(0) edge (12)
				
				(11) edge [dashed] node[left=0] {$h$} (21)
				(12) edge (22)
				
				(21) edge [dashed] (31)
				(21) edge (32)
				(22) edge (33)
				(22) edge (34)
				
				(31) edge (41)
				(32) edge [dashed] (42)
				(33) edge (43)
				(33) edge (44)
				(34) edge (45)
				
				(i1) edge[draw=lightgray, bend right=5] node[left] {\textcolor{gray}{$T_2$}} node[right] {\textcolor{gray}{$T_1$}} (i2)
				;
				\end{tikzpicture}\]
			\end{subfigure}
			
		\end{center}
		
		\caption{$\partial B$ are the dashed edges, the blank vertices are the finite sets $B \subset V$. The grey line shows the separation between $T_1$ and $T_2$}
		\label{Fig2}
	\end{figure}
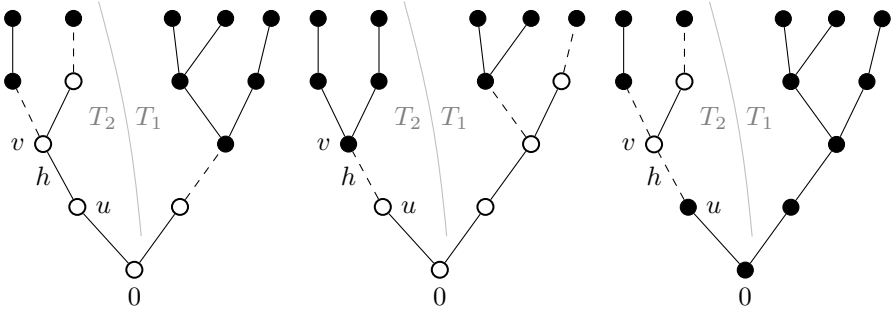
	
	So $\sigma$ is not a ground state.\\
	
	\noindent
	$i) \Rightarrow ii)$: We assume that $ \inf\{\sum_{e\in \Pi} |J_{e}| : \Pi \ \mbox{cutset}\} > 0$ and construct a non-trivial ground state from this. For a tree $T$ let $T_{1} = (V_{1},E_{1}), \ T_{2} = (V_{2},E_{2})$ be two subtrees such that $V_{1} \cap V_{2} = \left\{0\right\}$ and $\mbox{MaxFlow}_{T_{1}}(0 \rightarrow \infty, \langle |J_{e}| \rangle) \geq \mbox{MaxFlow}_{T_{2}}(0 \rightarrow \infty, \langle |J_{e}| \rangle) > 0$, where $\mbox{MaxFlow}_{T_{1}}$ is the maximal flow in $T_{1}$, respectively $T_{2}$. We choose the subtrees $T_{1}$ and $T_{2}$ before we choose the root here. Then we can orient all edges pointing away from 0. Let $h = (u,v) \in E_{2}$, such that for every $\epsilon > 0$
	\begin{equation*}
	\mbox{MaxFlow}(0 \rightarrow \infty, \langle |J_{e}| \rangle) <
	\mbox{MaxFlow}(0 \rightarrow \infty, \langle |J_{e}| + \epsilon \mathbbm{1}_{\left\{e=h\right\}} \rangle) \ .
	\end{equation*}
	That means that increasing the coupling value at $h$ increases the Maximal Flow from the root to $\infty$ in $T$ and therefore also in $T_{2}$. In particular $|J_{h}| \leq  \mbox{MaxFlow}_{T_{2}}(0 \rightarrow \infty, \langle |J_{e}| \rangle)$; the almost sure existence of such an edge and such subtrees will be discussed in Lemma \ref{L2}.\newline
	\noindent
	Now define $\sigma \in \left\{-1,+1\right\}^{V}$ by
	\begin{equation}\label{sigma}
	J_{xy}\sigma_{x}\sigma_{y} =
	\begin{cases}
	< 0 & (x,y)=h \\
	> 0 & \mbox{else}
	\end{cases}
	\end{equation}
	for $(x,y) \in E$. From (\ref{sigma}) one can extract a $\sigma \in \{-1,+1\}^V$, which is unique up to a global spin flip and further $\sigma$ is a ground state. To see this we will show (\ref{gs}) for three different cases of finite sets $B \subset V$. The three cases correspond to the three trees (from left to right) in Figure \ref{Fig2}.\\
	\noindent
	{\sl Case 1.} $(u,v) \notin \partial B$ : Here (\ref{gs}) is clearly true, as $(u,v) \notin \partial B$ and $J_{xy}\sigma_{x}\sigma_{y} \geq 0$ for every $(x,y) \in E, (x,y) \neq (u,v)$.\\
	\noindent
	{\sl Case 2.} $u \in B, v \notin B$ : First note that $|J_{h}| = \min\left\{|J_{f}| : f \in \mathcal{P}_{h} \right\}$ due to the construction. If $\partial B$ contains an edge $f \in \mathcal{P}_{h}$ (\ref{gs}) is true, as $|J_{h}| \leq |J_{f}|$. Otherwise $\partial B$ contains a cutset $\Pi$ separating 0 and $\infty$ in $T_{1}$. Hence 
	\begin{equation*}|J_{h}| \leq \mbox{MaxFlow}_{T_{1}}(0 \rightarrow\ \infty, \langle |J_{e}| \rangle) \leq \sum_{f \in \Pi} |J_{f}| \end{equation*} 
	and (\ref{gs}) is true.  \\
	\noindent
	{\sl Case 3.} $u \notin B, v \in B$ : Here $\partial B$ contains a cutset $\Pi_{\succeq h}$ separating $h$ and $\infty$ in $T_{\succeq h}$. For this cutset
	\begin{equation*}
	\sum_{e \in \Pi_{\succeq h}} |J_{e}| \geq |J_{h}|
	\end{equation*}
	due to (\ref{MFMC}). So 
	\begin{equation*}
	\sum_{(x,y) \in \partial B} J_{xy}\sigma_{x}\sigma_{y} \geq \sum_{e \in \Pi_{\succeq h}} |J_{e}| - |J_{h}| \geq 0
	\end{equation*}
	and $\sigma$ is a ground state.
\end{proof}

As vanishing of the maximal flow from the root to $\infty$ does not depend on the values of finitely many couplings (we assumed $\nu(\left\{0\right\})=0$) we get that $\nu^{E}(MaxFlow(0 \rightarrow \infty, \langle |J_{e}| \rangle) = 0) \in \left\{0,1\right\}$ by Kolmogorov's 0-1-law. Hence uniqueness of ground states is a deterministic property for trees with coupling distributions of linear growth; in Corollary \ref{infgs} we will see that even $|\mathcal{G}(J)|=\infty$ in the case of non-uniqueness holds. The items $ii)$ and $iii)$ of Theorem $\ref{T1}$ are equivalent for every graph by the MaxFlow-MinCut-Theorem \cite{ford2015flows}. For the proof of the implication $ii) \Rightarrow i)$ in Theorem \ref{T1} we did not use the linear growth assumption, so this implication holds, whenever $\nu(\left\{0\right\})=0.$

\begin{lemma}\label{L2}
	Let $T=(V,E)$ be a tree and $J_{e}$ be distributed according to some distribution of linear growth $\nu$ such that MaxFlow$(0\rightarrow \infty, \langle |J_{e}| \rangle) > 0$ a.s., then there exists some vertex $0$ and subtrees $T_{1}=(V_{1}, E_{1})$ and $T_{2}= (V_{2},E_{2})$ and an edge $h \in E_{2}$ such that $V_{1}\cap V_{2}=\left\{0\right\}$, $E_{1}\cup E_{2} = E$ and
	\begin{equation}\label{v1}
	{\mathrm{MaxFlow}}_{T_{1}}(0 \rightarrow \infty, \langle |J_{e}| \rangle) \geq {\mathrm{MaxFlow}}_{T_{2}}(0 \rightarrow \infty, \langle |J_{e}| \rangle) > 0
	\end{equation}
	and for every $\epsilon > 0$
	\begin{equation}\label{v2}
	{\mathrm{MaxFlow}}_{T_{2}}(0 \rightarrow \infty, \langle |J_{e}| \rangle) <
	{\mathrm{MaxFlow}}_{T_{2}}(0 \rightarrow \infty, \langle |J_{e}| + \epsilon \mathbbm{1}_{\left\{e=h\right\}} \rangle)
	\end{equation}
	where we think of the tree $T$ in such a way, that all edges in the trees $T_{1}$ and $T_{2}$ are pointing away from $0$.
\end{lemma}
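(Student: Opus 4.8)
The plan is: (1) use tail triviality to reduce to a deterministic statement about the subtree where all downstream maximal flows are positive; (2) show that in such a subtree there a.s.\ exist edges which are the binding constraint on the flow of the subtree they head, call them \emph{tight}; (3) locate a branch point, re-root there, and split off a child--branch of minimal flow through a tight edge; (4) verify (\ref{v1})--(\ref{v2}) from the flow recursion on trees.

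\emph{Reduction.} By Kolmogorov's $0$--$1$ law $M:=\mathrm{MaxFlow}(0\to\infty,\langle|J_e|\rangle)$ is $\nu^E$-a.s.\ a constant, which is positive by hypothesis, and for every $x\in V$ so is $m_x:=\mathrm{MaxFlow}_{T_{\succeq x}}(x\to\infty,\langle|J_e|\rangle)$. Put $V^+:=\{x:m_x>0\}$ and let $T^+$ be the induced subtree. From $m_x=\sum_{x\to y}\min(|J_{(x,y)}|,m_y)$ one reads off that $V^+$ is a subtree containing $0$, that every vertex of $T^+$ has a child in $V^+$ (so $T^+$ is infinite and locally finite), that $\sum_{x\in V^+,\,|x|=n}m_x$ is non-decreasing in $n$ and hence $\ge m_0=M$ for all $n$, and that $\mathrm{MaxFlow}_{T^+}(x\to\infty)=m_x$ since branches leaving $V^+$ carry no flow. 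So it suffices to build $T_1,T_2,h$ inside $T^+$, where now $m_x>0$ for all $x$. (I assume $\nu$ has no atom on $(0,\infty)$; otherwise an edge with $|J_e|=m_y$, which would obstruct (\ref{v2}), can occur with positive probability and a short perturbation argument is needed to avoid it.)

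\emph{Tight edges are abundant.} Call $e=(x,y)\in E^+$ \emph{tight} if $|J_e|<m_y$ and \emph{slack} if $|J_e|>m_y$. Fix a subtree $B=T^+_{\succeq w}$ and condition on the couplings on all edges of $B$ at levels $\ge n+1$; this fixes $m_y$ for every $y\in B$ with $|y|=n+1$, while the couplings on the edges of $B$ between levels $n$ and $n+1$ are i.i.d.\ and independent of the conditioning. Hence the conditional probability that none of these is tight equals $\prod_y\bigl(1-\nu((-m_y,m_y))\bigr)$, the product over that (finite) set of $y$. Linear growth gives $c,t_0>0$ with $\nu((-t,t))\ge c\min(t,t_0)$ for all $t>0$, so, using $\sum_y m_y\ge m_w$ and $1-s\le e^{-s}$, this product is at most $\exp\!\bigl(-c\sum_y\min(m_y,t_0)\bigr)\le\exp(-c\min(m_w,t_0))<1$. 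Iterating the conditioning over levels $n,n+1,\dots,N$ gives $\nu^E(B\text{ has no tight edge at levels}\ge n)\le\exp\bigl(-c(N-n)\min(m_w,t_0)\bigr)\to0$, so a.s.\ every such $B$ contains a tight edge. This estimate --- the one place where linear growth, not merely $\nu(\{0\})=0$, is used --- is the technical heart of the lemma.

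\emph{Construction and verification.} Descending from $0$, always into a child with positive downstream flow, cannot go on forever: along such a ray $0=u_0\to u_1\to\cdots$ one would have $m_{u_i}=\min(|J_{(u_i,u_{i+1})}|,m_{u_{i+1}})$, hence $m_{u_0}\le m_{u_1}\le\cdots$ and $|J_{(u_i,u_{i+1})}|\ge m_{u_i}\ge M$ for every $i$, which has probability zero (each $u_{i+1}$ depends only on the couplings strictly below $u_i$, so this would force infinitely many i.i.d.\ couplings to exceed $M$, while $\nu((-M,M))>0$). Thus a.s.\ some vertex $r$ has at least two children in $V^+$; re-root $T^+$ at $r$. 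Let $B_1,\dots,B_d$ ($d\ge2$) be the branches at $r$ rooted at those children, with flows $g_j:=\mathrm{MaxFlow}_{\{r\}\cup B_j}(r\to\infty)>0$; since the branches at $r$ partition its flow, $\sum_{j\le d}g_j\le\mathrm{MaxFlow}_{T^+}(r\to\infty)$, and relabelling so $g_d=\min_j g_j$ we get $g_d\le\tfrac1d\sum_{j\le d}g_j\le\tfrac12\,\mathrm{MaxFlow}_{T^+}(r\to\infty)$. Set $T_2:=\{r\}\cup B_d$ and let $T_1$ be $r$ together with all remaining branches at $r$; then $V_1\cap V_2=\{r\}$, $E_1\cup E_2=E$, $\mathrm{MaxFlow}_{T_2}(r\to\infty)=g_d$ and $\mathrm{MaxFlow}_{T_1}(r\to\infty)=\mathrm{MaxFlow}_{T^+}(r\to\infty)-g_d\ge g_d>0$, which is (\ref{v1}). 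By the previous step $B_d$ contains a tight edge; let $h=(u,v)$ be the \emph{first} tight edge on the path from $r$ into $B_d$, so every edge on the path from $r$ to $u$ is slack. Because $T_2$ contains the whole branch $B_d$, the maximal-flow values of the subtrees of $T_2$ hanging below that path coincide with the $m$-values, so the path is slack in $T_2$ too and the downstream flow of $h$ in $T_2$ equals $m_v>|J_h|$. Consequently, raising $|J_h|$ by any $\epsilon>0$ strictly increases $\min(|J_h|,m_v)$, and since $b\mapsto\min(a,b)$ is strictly increasing on $\{b\le a\}$ and equals $a$ on $\{b>a\}$, this strict increase propagates up the slack path via $g_{T_2}(x)=\sum_{x\to x'}\min(|J_{(x,x')}|,g_{T_2}(x'))$ to $\mathrm{MaxFlow}_{T_2}(r\to\infty)$; this is (\ref{v2}). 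Besides the abundance estimate, the points I expect to require the most care are the conditioning argument behind the branch point and, for general $\nu$, the exclusion of critical edges $|J_e|=m_y$.
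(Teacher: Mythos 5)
Your overall strategy (the flow recursion $m_x=\sum_{x\to y}\min(|J_{(x,y)}|,m_y)$, abundance of tight edges via linear growth, re-rooting at a branch point, propagating a strict increase up a slack path) is a genuinely different and viable route from the paper's, but as written it rests on a false claim: by Kolmogorov's $0$--$1$ law only the \emph{events} $\{M>0\}$ and $\{m_x>0\}$ are trivial; the random variables $M=\mathrm{MaxFlow}(0\to\infty,\langle|J_e|\rangle)$ and $m_x$ are \emph{not} a.s.\ constants (indeed $M\le\sum_{0\to x}|J_{(0,x)}|$, and linear growth forces $\mathbb{P}(M<\epsilon)>0$ for every $\epsilon>0$). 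This is not cosmetic, because both of your key probabilistic estimates lean on the constancy. In the abundance step the bound $\exp(-c(N-n)\min(m_w,t_0))$ is obtained by ``iterating the conditioning'', but $\min(m_w,t_0)$ is a random variable that is not measurable with respect to any of the $\sigma$-algebras you condition on, so the conditional bounds do not simply multiply; one must additionally observe that on the event ``no tight edge at level $k$'' the level sums $S_k=\sum_{|y|=k}m_y$ are preserved exactly, so that the $\mathcal{F}_{\ge k}$-measurable bounds $\exp(-c\min(S_k,t_0))$ telescope, and then finish with dominated convergence using $S_k\ge m_w>0$ a.s. Likewise, in the branch-point step, ``infinitely many i.i.d.\ couplings exceed $M$'' is not a probability-zero event when $M$ is random and the ray is selected using the couplings; the repair is again a level-by-level conditioning giving $\mathbb{P}(T^+\text{ is a ray},\ \inf_i|J_{e_i}|\ge\delta)\le(1-\nu((-\delta,\delta)))^n$ for every $n$, followed by a union over $\delta=1/k$. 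The paper avoids all of this by working with the tail functionals $L_f=\liminf_{\Pi\to\infty}\sum_{e\in\Pi_{\succeq f}}|J_e|$, which genuinely \emph{are} a.s.\ constants, so that a plain second Borel--Cantelli argument with deterministic thresholds applies.

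A second gap, which you flag but do not close, concerns atoms: a distribution of linear growth may have atoms away from $0$, in which case ``critical'' edges with $|J_e|=m_y$ occur with positive probability; such an edge above $h$ blocks the propagation of the strict increase, so your verification of (\ref{v2}) is incomplete in a regime the lemma still covers. The paper sidesteps this by \emph{defining} $h=\phi(\tilde n)$ through the first index at which raising a capacity strictly increases the maximal flow, so strictness is built into the choice of $h$ rather than deduced from a slack/tight dichotomy. Both gaps are repairable, but they are real, and the first touches the technical heart of the lemma.
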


\begin{figure}
	\[\begin{tikzpicture}[scale = 1., y={1cm/1.2}]
	\vertex[fill] (0) at (0,0) [label=below:$\tilde{0}$] {};
	
	\vertex[fill] (11) at (-0.75,1) {};
	\vertex[fill] (12) at (0.6,1) {};
	
	\vertex[fill] (21) at (-1.2,2) {};
	\vertex[fill] (22) at (1.2,2) {};
	
	\vertex[fill] (31) at (-1.6,3) {};
	\vertex[fill] (32) at (-0.8,3) {};
	\vertex[fill] (33) at (0.6,3) {};
	\vertex[fill] (34) at (1.6,3) {};
	
	\vertex[fill] (41) at (-1.6,4) {};
	\vertex[fill] (42) at (-0.8,4) {};
	\vertex[fill] (43) at (0.5,4) {};
	\vertex[fill] (44) at (1.2,4) {};
	\vertex[fill] (45) at (1.8,4) {};
	
	\vertex[draw=none] (i1) at (0.1,0.4) {};
	\vertex[draw=none] (i2) at (-0.5,4.4) {};
	
	\path
	(0) edge (11)
	(0) edge (12)
	
	(11) edge [line width = 1.3pt] node[left=0] {$\phi(\tilde{n})$} (21)
	(12) edge (22)
	
	(21) edge (31)
	(21) edge (32)
	(22) edge (33)
	(22) edge (34)
	
	(31) edge (41)
	(32) edge (42)
	(33) edge (43)
	(33) edge (44)
	(34) edge (45)
	
	(i1) edge[draw=lightgray, bend right=5] node[left] {\textcolor{gray}{$\tilde{T_2}$}} node[right] {\textcolor{gray}{$\tilde{T_1}$}} (i2)
	;

	\vertex[fill] (0) at (+7,0) {};
	
	\vertex[fill] (11) at (-0.75+7,1) [label=below:$0$] {};
	\vertex[fill] (12) at (0.6+7,1) {};
	
	\vertex[fill] (21) at (-1.2+7,2) {};
	\vertex[fill] (22) at (1.2+7,2) {};
	
	\vertex[fill] (31) at (-1.6+7,3) {};
	\vertex[fill] (32) at (-0.8+7,3) {};
	\vertex[fill] (33) at (0.6+7,3) {};
	\vertex[fill] (34) at (1.6+7,3) {};
	
	\vertex[fill] (41) at (-1.6+7,4) {};
	\vertex[fill] (42) at (-0.8+7,4) {};
	\vertex[fill] (43) at (0.5+7,4) {};
	\vertex[fill] (44) at (1.2+7,4) {};
	\vertex[fill] (45) at (1.8+7,4) {};
	
	\vertex[draw=none] (i1) at (-0.7+7,1.1) {};
	\vertex[draw=none] (i2) at (0+7,4.5) {} ;
	
	\path
	(0) edge (11)
	(0) edge (12)
	
	(11) edge [line width = 1.3pt] node[left=0] {$\phi(\tilde{n})$} (21)
	(12) edge (22)
	
	(21) edge (31)
	(21) edge (32)
	(22) edge (33)
	(22) edge (34)
	
	(31) edge (41)
	(32) edge (42)
	(33) edge (43)
	(33) edge (44)
	(34) edge (45)
	
	(i1) edge[draw=lightgray, bend right=15] node[left] {\textcolor{gray}{$T_2$}} node[right] {\textcolor{gray}{$T_1$}} (i2)
	;
	
	\vertex[draw=none] (p1) at (2,2) {};
	\vertex[draw=none] (p2) at (4.8,2) {};
	
	\path
	(p1) edge[->, line width = 1.3pt] (p2)
	;
	
	\end{tikzpicture}\]
	\centering
	\caption{Different separations of $T$ into two subtrees. $\phi(\tilde{n})$ is the bold edge.}
	\label{Fig3}
\end{figure}
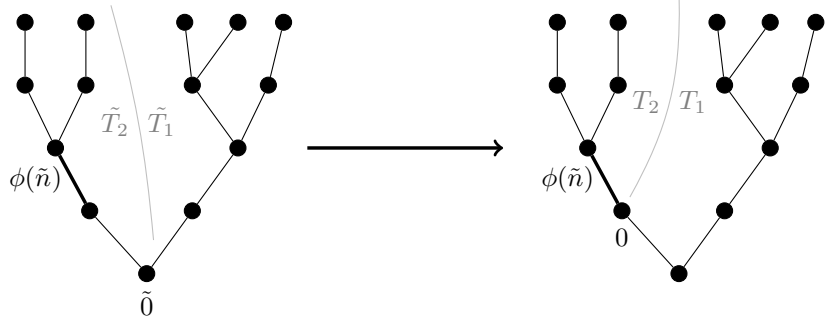

\begin{proof}
	We start with any separation of the tree $T$ into two subtrees $\tilde{T_1}=(\tilde{V_1},\tilde{E_1})$ and $\tilde{T_2}=(\tilde{V_2},\tilde{E_2})$ satisfying $E=\tilde{E_1}\cup\tilde{E_2}$, $\tilde{V_1}\cap\tilde{V_2}=\left\{\tilde{0}\right\}$ and
	\begin{equation*}
	{\mathrm{MaxFlow}}_{\tilde{T_1}}(\tilde{0} \rightarrow \infty, \langle |J_{e}| \rangle) \geq {\mathrm{MaxFlow}}_{\tilde{T_2}}(\tilde{0} \rightarrow \infty, \langle |J_{e}| \rangle) > 0 \ .
	\end{equation*} 
	Let $\phi : \N \rightarrow \tilde{E_2}$ be a bijective enumeration of $\tilde{E_2}$ which starts with all edges adjacent to $0$, then all edges $e \in \tilde{E_2}$ s.t. $|e|=1$ and so on. So in particular $n \mapsto |\phi(n)|$ is increasing. Define
	\begin{equation*}
	L\coloneqq \liminf_{\Pi \rightarrow \infty} \sum_{e \in \Pi \cap \tilde{E_2}} |J_{e}| \text{  \ \ and \ \  }
	L_{f} \coloneqq \liminf_{\Pi \rightarrow \infty} \sum_{e \in \Pi_{\succeq f}} |J_{e}|
	\end{equation*}
	for $f \in \tilde{E_2}$. $L$ and $L_{f}$ are constant almost surely by Kolmogorov's 0-1-law and $L>0$ by assumption. Furthermore
	\begin{equation*}
	\sum_{f\in \tilde{E_2}: |f|=k} |L_{f}| = L
	\end{equation*}
	for every $k \in \N$. Now define the function $MF:\N\rightarrow\R_{\ge 0}$ by
	\begin{equation*}
	MF(n) \coloneqq \mbox{MaxFlow}_{\tilde{T_2}}(0\rightarrow \infty, \langle |J_{e}| + \infty \cdot \mathbbm{1}\left\{\phi^{-1}(e)\leq n\right\} \rangle ) \ .
	\end{equation*}
	This means, we set the capacities at the edges $\left\{\phi(1) , ... , \phi(n)\right\}$ to $\infty$ and $MF(n)$ is the Maximal Flow in $\tilde{T_2}$ with respect to the new capacities. $MF$ is an increasing function, see Lemma \ref{L9}, bounded by $L$ and $MF(n) \rightarrow L$ for $n \rightarrow \infty$. We now want to show that $MF(0) = MaxFlow_{\tilde{T_2}}(0\rightarrow \infty, \langle |J_{e}|\rangle) < L$ almost surely. To see this, note that $MF(0) = L$ implies $|J_{f}| \geq L_{f} \ \forall f \in \tilde{E_2}$. Now let $c > 0$ be such that $\nu((-\epsilon,\epsilon)) \geq c\cdot\epsilon$ for small enough $\epsilon$. Then $\sum_{f\in \tilde{E_2}} \mathbb{P}\left(|J_{f}| < L_{f}\right) = \infty$: If $L_{f} \nrightarrow 0$ for $|f| \rightarrow \infty$ this is true, as all $J_{f}$ have the same distribution. In the case $L_f \rightarrow 0$ for $|f| \rightarrow \infty$
	\begin{equation*}
	\sum_{f\in \tilde{E}_{2}} \mathbb{P}\left(|J_{f}| < L_{f}\right) = \sum_{n=0}^{\infty} \ 
	\sum_{\substack{|f|=n \\ f \in \tilde{E}_2}}  \mathbb{P}\left(|J_{f}| < L_{f}\right) \geq
	\sum_{n=k}^{\infty} \ \sum_{\substack{|f|=n \\ f \in \tilde{E}_2}}  c \cdot L_{f} = \sum_{n=k}^{\infty} c\cdot L = \infty
	\end{equation*}
	for some $k$ large enough. Hence we obtain $MF(0)<L$ by a Borel-Cantelli-argument and independence of the $J_{f}$$'s$.\\
	\noindent
	Let $\tilde{n}$ be the smallest integer such that $MF(\tilde{n})>MF(0)$. As $\tilde{n}$ is the smallest integer, $|J_{g}| \geq |J_{\phi(\tilde{n})}| \ \forall g \in \mathcal{P}_{\phi(\tilde{n})}$, so actually we can choose $0$ as the vertex adjacent to $\phi(\tilde{n})$ and nearer to $\tilde{0}$. Then (\ref{v1}) and (\ref{v2}) hold true when one considers the subtrees $T_{\succeq \phi(\tilde{n})}$ and $T\setminus T_{\succeq \phi(\tilde{n})}$ with appropriate edge and vertex sets. A picture of our construction is given in Figure \ref{Fig3}.\\
\end{proof}

\begin{corollary}\label{infgs}
	Assume that the conditions of Lemma \ref{L2} hold. Then $|\mathcal{G}(J)|=\infty$ almost surely. Hence $|\mathcal{G}(J)| $ is either 2 or infinity almost surely for every tree and every distribution of linear growth.
\end{corollary}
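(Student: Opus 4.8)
The plan is to split the argument in two. First, for the last sentence of the statement: the quantity $\mathrm{MaxFlow}(0\rightarrow\infty,\langle|J_{e}|\rangle)$ is $\nu^{E}$-a.s.\ either $0$ or strictly positive, by the Kolmogorov $0$--$1$ argument recorded right after the proof of $i)\Leftrightarrow ii)$. In the first case Theorem~\ref{T1} gives that the only ground states are the two natural ones (the unique-up-to-global-flip $\sigma$ with every edge satisfied, together with $-\sigma$, which are distinct since $V\neq\emptyset$), so $|\mathcal{G}(J)|=2$; in the second case the hypothesis of Lemma~\ref{L2} holds, so the first assertion of the corollary applies. Hence the whole statement reduces to showing that $\mathrm{MaxFlow}(0\rightarrow\infty,\langle|J_{e}|\rangle)>0$ a.s.\ forces $|\mathcal{G}(J)|=\infty$ a.s.

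For this main part I would isolate two ingredients. (a) From the proof of $i)\Rightarrow ii)$: if $h$ is the edge furnished by Lemma~\ref{L2}, then the configuration $\sigma^{h}$, unique up to a global spin flip, for which $h$ is the only unsatisfied edge is a ground state of $T$. (b) A gluing principle: if $T=T_{1}\cup T_{2}$ with $V_{1}\cap V_{2}=\{v_{0}\}$ and $E_{1}\cup E_{2}=E$, and $\rho\in\{-1,+1\}^{V}$ restricts to a ground state of $T_{1}$ and of $T_{2}$, then $\rho$ is a ground state of $T$: for finite $B\subset V$ the set $\partial B$ splits as the disjoint union of the boundary of $B\cap V_{1}$ in $T_{1}$ and of $B\cap V_{2}$ in $T_{2}$, so the left side of (\ref{gs}) for $T$ is the sum of the two nonnegative terms coming from (\ref{gs}) in $T_{1}$ and in $T_{2}$. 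In particular, flipping spins so that a single edge $h\in E_{1}$ becomes the unique unsatisfied edge of $T$ (all edges of $T_{2}$ stay satisfied, hence form a ground state of $T_{2}$) gives a ground state of $T$ as soon as it gives one of $T_{1}$.

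The core step is then to iterate Lemma~\ref{L2}. Alongside $h=h_{1}$ the lemma yields a decomposition $T=T_{1}\cup T_{2}$ with $h_{1}\in E_{2}$ incident to the root, and by (\ref{v1}) the complement tree $T^{(1)}:=T_{1}$ satisfies $\mathrm{MaxFlow}_{T^{(1)}}(0\rightarrow\infty,\langle|J_{e}|\rangle)>0$. Moreover $T^{(1)}$ is a union of full subtrees $T_{\succeq w}$ of $T$ glued at the root, so it has no finite branches, and it is infinite (else its maximal flow would vanish); hence Lemma~\ref{L2} applies again to $T^{(1)}$, producing an edge $h_{2}\in E(T^{(1)})$, a ground state $\sigma^{h_{2}}$ of $T^{(1)}$ — and therefore of $T$ by (b) — and a new complement $T^{(2)}\subset T^{(1)}$ of positive maximal flow. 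Iterating indefinitely yields edges $h_{1},h_{2},h_{3},\dots$; since $h_{k}\in E(T^{(k-1)})$ while, for $j<k$, $h_{j}$ lies in the piece that was split off at step $j$, which is disjoint from $E(T^{(k-1)})$, the $h_{k}$ are pairwise distinct, and repeated use of (b) down the nested decompositions shows each $\sigma^{h_{k}}$ is a ground state of $T$. As $h_{j}$ is unsatisfied in $\sigma^{h_{j}}$ but satisfied in $\sigma^{h_{k}}$ for $j\neq k$, the $\sigma^{h_{k}}$ are pairwise distinct ground states, so $|\mathcal{G}(J)|=\infty$.

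I expect the delicate point to be the repeated application of Lemma~\ref{L2} to $T^{(k)}$, which is itself a coupling-dependent random object, so one must check it still meets the standing hypotheses — a tree with no finite branches, i.i.d.\ couplings of linear growth, and positive maximal flow. The first two are in hand (positivity from (\ref{v1})), and the i.i.d.\ structure is inherited because $T^{(k)}$ is assembled from entire subtrees of $T$; to be fully rigorous one runs the whole induction on a single $\nu^{E}$-full-measure event, or states a version of Lemma~\ref{L2} phrased so as to be iterable. The remaining items — the boundary-splitting identity in (b), the distinctness bookkeeping for the $h_{k}$, and that configurations with different unsatisfied-edge-sets are different elements of $\mathcal{G}(J)$ — are routine.
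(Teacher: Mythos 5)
Your proof is correct and follows essentially the same route as the paper: iterate the construction of Lemma \ref{L2} on the successive complement subtrees $T^{(k)}$ (whose maximal flow stays positive by (\ref{v1})) to obtain infinitely many distinct edges $h_k$, each of which is the unique unsatisfied edge of a ground-state pair. Your explicit gluing principle and the reduction of the ``hence'' clause via the Kolmogorov $0$--$1$ law just spell out details the paper's one-line iteration leaves implicit.
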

\begin{proof}
	We have to show that there exist even infinitely many such divisions into two subtrees and respective edges $h$ satisfying (\ref{v1}) and (\ref{v2}). We can apply the construction of the proof of Lemma \ref{L2} also to the tree $\tilde{T}_{1}$ instead of $T$ and get subtrees $\tilde{T}_{1,1}$ and $\tilde{T}_{1,2}$ of $\tilde{T}_{1}$ and an edge $h_{1} \in \tilde{E}_{1,2}$, such that $\tilde{T}_{1,1}$ is the tree connected to the root $0$ and (\ref{v1}) and (\ref{v2}) hold true in $\tilde{T}_{1}$. By iterating this idea, we get the existence of infinitely many partitions of $T$ and edges $h$ satisfying (\ref{v1}) and (\ref{v2}). But as every such edge corresponds to a uniquely defined ground state pair (the one where $h$ is the only unsatisfied edge), we get $|\mathcal{G}(J)| = \infty$.
\end{proof}

Actually it suffices to require $\nu\left(\left\{0\right\}\right)=0$ and the lower bound of the linear growth condition. It seems plausible that Lemma \ref{L2}, and hence the equivalence of $i)$ and $ii)$ of Theorem \ref{T1} even hold true, as soon as $\nu((-\epsilon,\epsilon)) > 0 \ \forall \epsilon > 0$ and $\nu(\left\{0\right\})=0$, but there is no proof known to us.

\begin{corollary}
	Let T be a tree s.t. $p_{c} < 1$, where $p_{c}$ denotes the critical probability for bond percolation. Let $\nu$ be a probability measure on $\R$ such that $\nu((-\epsilon,\epsilon)) > 0$ and $\nu(\left\{0\right\})=0$ for every $\epsilon>0$. Then $|\mathcal{G}(J)| = \infty$ $\nu^{E}$-a.s.
\end{corollary}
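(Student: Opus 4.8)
The plan is to reduce to the situation $\mathrm{MaxFlow}(0\to\infty,\langle|J_e|\rangle)>0$ by a percolation comparison, and then to produce infinitely many ``bottleneck'' edges by hand, running on each of them the ground-state construction from the proof of $i)\Rightarrow ii)$. The hypothesis $p_c<1$ is used precisely where Lemma~\ref{L2} would need the linear lower bound on $\nu$.

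First I set up the percolation. Since $\nu(\{0\})=0$ we have $\mathbb P(|J_e|>\epsilon)\to 1$ as $\epsilon\downarrow 0$, so, because $p_c<1$, we may fix $\epsilon>0$ with $p:=\mathbb P(|J_e|>\epsilon)>p_c$ and $\nu(\{\epsilon\})=0$; put $q:=\nu((-\epsilon,\epsilon))>0$. Call $e$ \emph{open} if $|J_e|>\epsilon$ and \emph{closed} otherwise; the open edges then form a supercritical ($p>p_c$) Bernoulli bond percolation on $T$. The only percolation input needed is: if a vertex $w$ lies in an infinite open cluster of a subtree $T'$ of $T$, then there is an infinite self-avoiding open path from $w$ inside $T'$ (König's lemma), and pushing flow $\epsilon$ along it gives $\mathrm{MaxFlow}_{T'}(w\to\infty,\langle|J_e|\rangle)\ge\epsilon$. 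In particular, since $p>p_c$ there is a.s.\ an infinite open cluster, and routing $\epsilon$ down it (and along the finite path from $0$) already yields $\mathrm{MaxFlow}(0\to\infty,\langle|J_e|\rangle)>0$ a.s., i.e.\ the negation of $ii)$.

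Next, the core of the argument: I would show that a.s.\ there are infinitely many edges $h=(u,v)$ (oriented $u\to v$) such that $(u,v)$ is closed, $v$ lies in an infinite open cluster of $T_{\succeq v}$, and $u$ lies in an infinite open cluster of $T_{\succeq u}\setminus T_{\succeq v}$. Each such $h$ produces a ground state in which $h$ is the unique unsatisfied edge: with the root placed at $u$ one has $\mathcal P_h=\{h\}$, and re-running Cases~1--3 of the proof that $i)\Rightarrow ii)$ only requires $|J_h|\le \mathrm{MaxFlow}_{T_{\succeq v}}(v\to\infty)$ and $|J_h|\le \mathrm{MaxFlow}_{T\setminus T_{\succeq v}}(u\to\infty)$, both of which hold because $|J_h|<\epsilon$ while the two max-flows are $\ge\epsilon$ by the observation above ($T_{\succeq u}\setminus T_{\succeq v}\subseteq T\setminus T_{\succeq v}$). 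Distinct $h$ give distinct ground-state pairs, so $|\mathcal G(J)|=\infty$ a.s.

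The step I expect to be the main obstacle is exhibiting these infinitely many edges, and this is exactly where $p_c<1$ does real work beyond positivity of the max-flow. For the needed positivity of probabilities one can argue as follows: from $\mathbb P_p(0\leftrightarrow\infty)>0$ together with $\mathbb P_p(x\leftrightarrow\infty)\le p\sum_{y:\,x\to y}\mathbb P_p^{T_{\succeq y}}(y\leftrightarrow\infty)$ one extracts a ray $0=x_0,x_1,\dots$ with $\mathbb P_p^{T_{\succeq x_i}}(x_i\leftrightarrow\infty)>0$ for all $i$; one then checks that infinitely many $x_i$ must possess a further child $z_i\neq x_{i+1}$ with $\mathbb P_p^{T_{\succeq z_i}}(z_i\leftrightarrow\infty)>0$, since otherwise, for $j$ large, the infinite cluster of $x_j$ in $T_{\succeq x_j}$ would be forced to run along the ray, giving $\mathbb P_p^{T_{\succeq x_{j}}}(x_{j}\leftrightarrow\infty)\le p^{\,n}\to 0$, a contradiction. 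For $h_i=(x_i,z_i)$ the three defining events then have probabilities bounded below ($\mathbb P_p^{T_{\succeq z_i}}(z_i\leftrightarrow\infty)$, $q$, and $\ge p\,\mathbb P_p^{T_{\succeq x_{i+1}}}(x_{i+1}\leftrightarrow\infty)$ respectively); and since the three events depend on the pairwise disjoint blocks $E_{\succeq v}$, $\{(u,v)\}$, $E(T_{\succeq u}\setminus T_{\succeq v})$, after passing to a well-separated subsequence of the $h_i$ (so that even the ``$u$-side'' blocks become disjoint) the events are independent and the second Borel--Cantelli lemma applies. An alternative, less hands-on route keeps the max-flow machinery: re-run the proofs of Lemma~\ref{L2} and Corollary~\ref{infgs} verbatim, replacing their sole use of the linear lower bound — the deduction of $\sum_f\mathbb P(|J_f|<L_f)=\infty$ from $\sum_f cL_f=\infty$ — by $\sum_f\mathbb P(|J_f|<L_f)\ge\sum_{f:\,L_f\ge\epsilon}q=\infty$, which is valid because $L_f\ge\epsilon$ for every edge $f$ whose far endpoint lies in an infinite open cluster of its own subtree and there are a.s.\ infinitely many such $f$; the delicate point in this variant is making sure such edges survive in the subtrees produced by the iteration in Corollary~\ref{infgs}, which one arranges by always peeling off the thinner side while keeping infinitely many disjoint infinite open clusters on the side that is iterated further.
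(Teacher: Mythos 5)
Your reduction is correct and in the same spirit as the paper: an edge $h=(u,v)$ with $|J_h|<\epsilon$ whose two sides each carry an infinite open cluster does yield a ground state with $h$ as the unique unsatisfied edge (your three-case check with root at $u$ is fine), and distinct such $h$ give distinct ground-state pairs. The gap is in the step that produces infinitely many such $h$. First, the independence claim cannot be repaired by subsampling: the ``$u$-side'' block for $h_i=(x_i,z_i)$ is $E(T_{\succeq x_i}\setminus T_{\succeq z_i})$, and since $x_j\succeq x_{i+1}$ for every $j>i$, this block contains \emph{all three} blocks of every later $h_j$. The blocks are nested, not pairwise disjoint, along any subsequence, so the second Borel--Cantelli lemma does not apply as invoked. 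Second, even granting independence you would need $\sum_i\mathbb P(A_i)=\infty$, and your extraction of the ray only guarantees that $\mathbb P^{T_{\succeq z_i}}(z_i\leftrightarrow\infty)$ and $\mathbb P^{T_{\succeq x_{i+1}}}(x_{i+1}\leftrightarrow\infty)$ are positive, not bounded below; the recursion $\mathbb P^{T_{\succeq x_i}}(x_i\leftrightarrow\infty)\ge p\,\mathbb P^{T_{\succeq x_{i+1}}}(x_{i+1}\leftrightarrow\infty)$ controls growth up the ray but permits summable decay, so the series may converge. (A possible repair: each $A_i$ depends only on couplings in $E(T_{\succeq x_i})$, so $\limsup_i A_i$ is a tail event and it would suffice to show $\limsup_i\mathbb P(A_i)>0$ — but that still requires choosing the ray so the off-ray percolation probabilities do not vanish, which you have not done.)

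Your alternative route at the end is essentially the paper's actual proof and is the one to complete. The paper argues that, since $p>p_c$, there are infinite subtrees $\tilde T$ with $|J_e|>\epsilon$ on all their edges, whence $L=\infty$ in the setting of Lemma~\ref{L2}; this makes the single step of that proof which uses the lower linear bound — establishing $MF(0)<L$ via Borel--Cantelli — automatic, because $MF(0)$ is bounded by the finite sum of the couplings at the root, and then Lemma~\ref{L2} and the iteration of Corollary~\ref{infgs} go through verbatim. Your variant instead keeps the Borel--Cantelli step and feeds it $\sum_f\mathbb P(|J_f|<L_f)\ge\sum_{f:\,L_f\ge\epsilon}q=\infty$, which is also valid, but it creates the extra bookkeeping you flag (preserving infinitely many infinite open clusters through the iteration) that the $L=\infty$ observation avoids.
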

\begin{proof}
	As $p_{c}<1$ there exists some $\epsilon > 0$ and infinitely many subtrees $\tilde{T}=(\tilde{V},\tilde{E})$ satisfying $|J_{e}|>\epsilon \ \forall e \in \tilde{E}$, hence $L=\infty$ for such a tree. By the same arguments as in the proof of Lemma \ref{L2} and Corollary \ref{infgs} we get that $|\mathcal{G}(J)| = \infty$ almost surely.
\end{proof}

\subsection{Random Walks and Maximal Flows for exponential couplings}\label{2.2}

In this section we prove a one-to-one correspondence between the maximal flow and recurrence/transience of random walks. The proof is based on the following theorem by R. Lyons, R. Pemantle and Y. Peres, see \cite{lyons1999resistance}.

\begin{theorem}\label{T4}
	Let G be a finite graph and $\kappa_{e}$ be independent exponentially-distributed random variables with mean $c_{e}$ and Z $\subset$ V, $0\in$ V. Then
	\begin{equation}\label{MF1}
	\mathbb{E}\left[{\mathrm{MaxFlow}}(0 \rightarrow Z, \langle \kappa_{e} \rangle)\right] \geq
	{\mathrm{Conduct}}(0 \rightarrow Z, \langle c_{e} \rangle) \ .
	\end{equation}
	
	Furthermore, if G is a tree, $0$ its root and $Z$ its leaves, then
	
	\begin{equation}\label{MF2}
	\mathbb{E}[{\mathrm{MaxFlow}}(0 \rightarrow Z, \langle \kappa_{e} \rangle)] \leq
	2{\mathrm{Conduct}}(0 \rightarrow Z, \langle c_{e} \rangle) \ .
	\end{equation}
\end{theorem}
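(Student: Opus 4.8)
The plan is to invoke the Max-Flow Min-Cut Theorem to write $\mathrm{MaxFlow}(0\to Z,\langle\kappa_e\rangle)=\min_{\Pi}\sum_{e\in\Pi}\kappa_e$ over edge-cutsets $\Pi$ separating $0$ from $Z$, and then --- in the tree case, which is the one needed here --- reduce both bounds to a single recursion. For a vertex $x$ let $M(x)$ be the maximal flow from $x$ to the leaves lying in the subtree rooted at $x$, and $C(x)$ the effective conductance of that subtree from $x$ to those leaves (so $C(x)=+\infty$ if $x$ is a leaf). On a tree a cheapest cutset separating $x$ from the leaves below it splits over the children of $x$: in the block formed by the edge $(x,y)$ and the subtree below $y$ one either pays $\kappa_{(x,y)}$ to cut that edge or pays a cheapest internal cutset there, and the blocks for distinct children are edge-disjoint and independent. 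Hence
\[
M(x)=\sum_{x\to y}\min\bigl(\kappa_{(x,y)},M(y)\bigr),
\qquad
C(x)=\sum_{x\to y}\frac{c_{(x,y)}\,C(y)}{c_{(x,y)}+C(y)},
\]
the second identity being the series and parallel laws. Both inequalities then follow by induction from the leaves, comparing the two recursions term by term; evaluating at the root gives $(\ref{MF1})$ and $(\ref{MF2})$.

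For $(\ref{MF2})$ the inductive claim is $\mathbb{E}[M(x)]\le 2C(x)$. Fixing a child $y$: since $z\mapsto\min(a,z)$ is concave and $\kappa_{(x,y)}$ is independent of $M(y)$, Jensen gives $\mathbb{E}[\min(\kappa_{(x,y)},M(y))]\le\mathbb{E}[\min(\kappa_{(x,y)},\mathbb{E}M(y))]\le\mathbb{E}[\min(\kappa_{(x,y)},2C(y))]=c_{(x,y)}\bigl(1-e^{-2C(y)/c_{(x,y)}}\bigr)$, using the inductive bound and monotonicity in the second slot (for a leaf child this is just $c_{(x,y)}\le2c_{(x,y)}$). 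The elementary inequality $1-e^{-2s}\le\frac{2s}{1+s}$, which reduces to $\log(1+s)<s$, then gives $\mathbb{E}[\min(\kappa_{(x,y)},M(y))]\le 2\,\frac{c_{(x,y)}C(y)}{c_{(x,y)}+C(y)}$, and summing over $y$ closes the induction. The factor $2$ is lost only at this one step; at the leaf level one has the exact equality $\mathbb{E}[M(x)]=C(x)$.

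For $(\ref{MF1})$ a scalar hypothesis fails --- a heavy-tailed $M(y)$ with the right mean would wreck the step --- so the hypothesis must also record a shape property, and the right one is that $M(y)$ has increasing failure rate (IFR) and $\mathbb{E}[M(y)]\ge C(y)$. This propagates: $\kappa_{(x,y)}$ is IFR, a minimum of independent IFR variables is IFR (hazard rates add), and a convolution of IFR distributions is IFR (Barlow--Proschan), so $M(x)$ is IFR; at the leaf level $M(x)$ is a convolution of exponentials and $\mathbb{E}[M(x)]=C(x)$. For the mean, an IFR variable $Y$ has survival function crossing $e^{-t/\mathbb{E}Y}$ exactly once, from above to below; multiplying $\mathbb{P}(M(y)>t)-e^{-t/\mathbb{E}M(y)}$ by the decreasing weight $e^{-t/c_{(x,y)}}$ and using that its unweighted integral is $0$ yields $\mathbb{E}[\min(\kappa_{(x,y)},M(y))]\ge\frac{c_{(x,y)}\,\mathbb{E}M(y)}{c_{(x,y)}+\mathbb{E}M(y)}\ge\frac{c_{(x,y)}C(y)}{c_{(x,y)}+C(y)}$, with equality exactly when $M(y)$ is exponential; summing over $y$ closes the induction. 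For a general finite graph, $(\ref{MF1})$ is the assertion of \cite{lyons1999resistance}, but the tree version above is what the present paper uses.

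The main obstacle is $(\ref{MF1})$: unlike the upper bound it genuinely fails for non-exponential coupling laws, and since effective conductance is concave in the edge conductances, no Jensen-type estimate can produce $\mathbb{E}[\mathrm{Conduct}(\langle\kappa_e\rangle)]\ge\mathrm{Conduct}(\langle c_e\rangle)$; the exponential law must enter through a distributional invariant (IFR here) that is stable under the min-and-convolve recursion and still dominates the conductance recursion term by term. The rest is routine.
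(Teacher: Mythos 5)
Your proof is correct, but the two halves sit differently relative to the paper. For (\ref{MF2}) you are essentially reproducing the paper's argument: the paper runs the same induction (organized as a case split on $\deg(0)$ rather than as a sum over children), applies Jensen to $\mathbb{E}[\min(\kappa_f,X)]=\mathbb{E}[1-e^{-X}]$ in exactly the same place, and closes with the same elementary inequality $1-e^{-2C}\le 2C/(1+C)$ (which the paper writes as $(1-C)e^{2C}\le 1+C$; your parenthetical reduction to $\log(1+s)<s$ is not quite the right normal form, but the inequality itself is fine). For (\ref{MF1}) your route is genuinely different. The paper follows Lyons--Pemantle--Peres: decompose the unit current flow $\theta$ into a probability measure $\mu$ on paths (their Lemma \ref{L5}), build the feasible flow $\psi(f)=\sum_{\mathcal{P}\ni f}\mu(\mathcal{P})\min_{e\in\mathcal{P}}\kappa_e/\theta(e)$, and use that $\min_{e\in\mathcal{P}}\kappa_e/\theta(e)$ is exponential with rate $\sum_{e\in\mathcal{P}}\theta(e)/c_e=\sum_{e\in\mathcal{P}}dV(e)=\mathrm{Resist}$, so each path contributes exactly the conductance in expectation. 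That argument extends to general finite graphs with only notational changes and uses nothing beyond the additivity of exponential rates. Your induction with the strengthened hypothesis ``$M(y)$ is IFR and $\mathbb{E}M(y)\ge C(y)$'' is tree-specific but valid: the recursion $M(x)=\sum_{x\to y}\min(\kappa_{(x,y)},M(y))$ is exact on trees, IFR is closed under independent minima and (by Barlow--Proschan) under convolution, and your single-crossing comparison of $\bar F_{M(y)}$ against $e^{-t/\mathbb{E}M(y)}$ weighted by $e^{-t/c}$ correctly yields $\mathbb{E}[\min(\kappa,M(y))]\ge c\,\mathbb{E}M(y)/(c+\mathbb{E}M(y))$. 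What your version buys is a local explanation of where exponentiality enters and when equality holds; what it costs is the external IFR-convolution theorem and the general-graph case of (\ref{MF1}), which you (like the paper, which also only writes out the tree case) delegate to the citation --- harmless here, since the application in Corollary \ref{C6} only needs trees.
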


Before going to the proof of Theorem \ref{T4} we need to prove the following Lemma, see also \cite{lyons1999resistance}. For the sake of completeness we will repeat the proofs by Lyons, Pemantle and Peres of Theorem \ref{T4} and Lemma \ref{L5}, but just for trees, where notation is a bit simpler.

\begin{lemma}\label{L5}
	Let $\theta$ be a flow from 0 to Z. Then there exists a measure $\mu$ on self-avoiding paths from $0$ to $Z$ so that
	\begin{equation}\label{Paths}
	\forall e \in E \text{    } \sum_{\mathcal{P} : e \in \mathcal{P}} \mu\left(\mathcal{P}\right) = \theta(e) \ .
	\end{equation}
\end{lemma}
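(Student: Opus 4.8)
The plan is to exploit the tree structure: since $G$ is a finite tree with root $0$ and $Z$ is its set of leaves, every self-avoiding path from $0$ to $Z$ is of the form $\mathcal{P}_z$, the unique geodesic from $0$ to a leaf $z\in Z$. Indeed, between two vertices of a tree there is exactly one self-avoiding path, and a leaf has degree one, so any self-avoiding path reaching it must terminate there. Consequently a measure on self-avoiding $0$--$Z$ paths is just an assignment of a nonnegative weight to each leaf, and the whole task reduces to choosing these weights so that \eqref{Paths} holds.

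The natural choice is $\mu(\{\mathcal{P}_z\})\coloneqq\theta(e_z)$, where $e_z$ denotes the unique edge of $T$ incident to the leaf $z$ (its parent edge), and $\mu$ is set to $0$ on all other self-avoiding paths. To verify \eqref{Paths}, fix an edge $e=(x,y)\in E$ oriented away from the root, so $x\to y$. A path $\mathcal{P}_z$ contains $e$ if and only if $y\preceq z$; hence $\sum_{\mathcal{P}:\,e\in\mathcal{P}}\mu(\mathcal{P})=\sum_{z\in Z:\,y\preceq z}\theta(e_z)$, and it remains to show that this sum equals $\theta(e)$.

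I would prove the identity $\theta(e)=\sum_{z\in Z:\,y\preceq z}\theta(e_z)$ by induction running from the leaves towards the root (equivalently, downward induction on the height of $T_{\succeq y}$). If $y\in Z$ then $e=e_y$ and the right-hand side is the single term $\theta(e_y)$. If $y\notin Z$, let $y_1,\dots,y_k$ be the children of $y$ with corresponding edges $e_i=(y,y_i)$; flow conservation at $y$ — which holds because $y\in V\setminus(\{0\}\cup Z)$ — gives $\theta(e)=\sum_{i=1}^{k}\theta(e_i)$, and the induction hypothesis applied to each $e_i$ yields $\theta(e_i)=\sum_{z\in Z:\,y_i\preceq z}\theta(e_z)$. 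Summing over $i$, and noting that the leaves below $y$ are the disjoint union over $i$ of the leaves below $y_i$, closes the induction. Since $G$ is finite, every subtree $T_{\succeq y}$ contains at least one leaf, so these sums are non-empty and finite and $\mu$ is a genuine finite measure on self-avoiding paths.

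There is no real obstacle once the tree structure is used; the only points requiring care are the bookkeeping with the edge orientation and the observation that conservation is imposed at $V\setminus(\{0\}\cup Z)$, not at $V\setminus\{0\}$ (the latter being the convention used for flows to $\infty$ in the excerpt). For a general finite graph one would instead induct on the number of edges with $\theta(e)>0$, repeatedly peeling off a self-avoiding $0$--$Z$ path carrying positive flow on all of its edges and subtracting its minimal flow value (which kills at least one edge); on a tree this procedure is precisely the leaf-by-leaf decomposition above, which is exactly why restricting to trees makes the notation lighter.
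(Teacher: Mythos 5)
Your proof is correct, but it takes a genuinely different route from the paper's. The paper proves the lemma by the standard flow-decomposition induction on the number of edges carrying nonzero flow: it picks a self-avoiding $0$--$Z$ path $\mathcal{P}$ along which $\theta$ is everywhere positive, subtracts $\alpha=\min_{e\in\mathcal{P}}\theta(e)$ times the unit flow along $\mathcal{P}$, applies the induction hypothesis to the resulting flow (which has strictly fewer flow-carrying edges), and adds $\alpha\cdot\delta_{\mathcal{P}}$ back; this is precisely the ``peeling'' argument you sketch in your final paragraph, and it works on any finite graph with any target set $Z$. Your argument instead uses the tree structure to write the measure down in closed form, $\mu(\mathcal{P}_z)=\theta(e_z)$ with $e_z$ the parent edge of the leaf $z$, and verifies (\ref{Paths}) by a downward induction on flow conservation at internal vertices. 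What your approach buys is an explicit, canonical $\mu$ with no existence step needed (the paper's proof tacitly requires that a self-avoiding $0$--$Z$ path with everywhere positive flow exists whenever some edge carries flow, which itself deserves a one-line justification by following the flow forward and backward); what it costs is generality: the formula $\mu(\mathcal{P}_z)=\theta(e_z)$ uses that $Z$ is exactly the leaf set, so that no flow passes through a vertex of $Z$. For a general $Z\subset V$, as in the first inequality of Theorem \ref{T4} stated for finite graphs, one would have to take $\mu(\mathcal{P}_z)$ to be the net excess of $\theta$ at $z$, or fall back on the peeling argument. Since the paper only ever applies the lemma on finite trees with $Z$ the leaf set (e.g.\ $Z=V_n$ in Corollary \ref{C6}), your restriction is harmless in context.
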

\begin{proof}
	We use induction on the number of edges satisfying $\theta(e) \neq 0$. For $n=0$ the statement is clearly true. Now let $n+1$ be the number of edges satisfying $\theta(e) \neq 0$ and let $\mathcal{P}$ be a self-avoiding path from $0$ to $Z$ satisfying $\alpha \coloneqq \min_{e \in \mathcal{P}} \theta(e) > 0$. Let $\theta_{1}$ be the unit flow along $\mathcal{P}$. Then $\theta_{2} = \theta - \alpha \cdot \theta_{1}$ is also a flow from $0$ to $Z$ with number of edges satisfying $\theta_{2}(e) \neq 0$ less or equal than $n$. So we can find a measure $\mu_{2}$ satisfying (\ref{Paths}) for $\theta_{2}$ instead of $\theta$. But now the measure $\mu \coloneqq \mu_{2} + \alpha \cdot \delta_{\mathcal{P}}$ has the desired property (\ref{Paths}) for $\theta$.
\end{proof}

With this we are now ready to prove Theorem \ref{T4}.

\begin{proof}
	Let $\theta$ be the current flow of strength 1 from 0 to $Z$ and let $V$ be the associated voltage function which satisfies $V(z)=0 \ \forall z \in Z$. Let $\mu$ be a measure on paths from 0 to $Z$ such that (\ref{Paths}) holds. Since $\theta$ is a unit flow $\mu$ is a probability measure. Define a new flow $\psi$ by
	\begin{equation*}
	\psi(f) \coloneqq \sum_{f \in \mathcal{P}} \mu\left(\mathcal{P}\right)\min_{e\in \mathcal{P}}\frac{\kappa_{e}}{\theta(e)} \ .
	\end{equation*}
	$\psi$ is also a flow with respect to $\kappa_{e}$ since
	\begin{equation*}
	\psi(f)=
	\sum_{f \in \mathcal{P}} \mu\left(\mathcal{P}\right) \min_{e\in \mathcal{P}} \frac{\kappa_{e}}{\theta(e)} \leq \sum_{f \in \mathcal{P}} \mu\left(\mathcal{P}\right) \frac{\kappa_{f}}{\theta(f)} = \kappa_{f} \ .
	\end{equation*}
	Therefore,
	\begin{equation*}
	\mbox{MaxFlow}(0\rightarrow Z) \geq \sum_{\mathcal{P}} \mu\left(\mathcal{P}\right) \min_{e\in \mathcal{P}} \frac{\kappa_{e}}{\theta(e)} \ .
	\end{equation*}
	As the $\kappa_{e}$ are exponentially distributed and independent we have for all $s > 0$
	\begin{equation*}
	\mathbb{P}\left(\min_{e\in \mathcal{P}} \frac{\kappa_{e}}{\theta(e)} > s \right) =
	\prod_{e \in \mathcal{P}} \mathbb{P} \left(\kappa_{e} > \theta(e)s \right) =
	\prod_{e \in \mathcal{P}} e^{-\frac{\theta(e)s}{c_{e}}} = e^{-s\sum_{e\in\mathcal{P}}\frac{\theta(e)}{c _{e}}} \ .
	\end{equation*}
	Hence
	\begin{equation*}
	\mathbb{E}\left[\min_{e\in \mathcal{P}} \frac{\kappa_{e}}{\theta(e)}\right] = 
	\left[\sum_{e\in\mathcal{P}}\frac{\theta(e)}{c _{e}}\right]^{-1} =
	\left[\sum_{e\in\mathcal{P}}dV(e)\right]^{-1}  =
	\mbox{Conduct}(0\rightarrow Z, \langle c_{e} \rangle) \ .
	\end{equation*}
	where $dV((x,y))=V(x)-V(y)$ for $(x,y) \in E$ with $|x|<|y|$.\\
	For (\ref{MF2}) we distinguish two cases. If $deg(0) \geq 2$, (\ref{MF2}) is true by linearity of expectation, as we can split up the tree into two or more subtrees. If $deg(0)=1$ and $f=(0,a) \in E$ and $Z$ are the leaves of the tree, we can assume without loss of generality that $c_{f} = 1$, as Conductance, Expectation and MaxFlow are all linear under positive scalings. Furthermore
	\begin{align*}
	\mbox{MaxFlow}\left(0 \rightarrow Z, \langle \kappa_{e} \rangle\right) = \min \left\{\kappa_{f} , \mbox{MaxFlow} \left(a \rightarrow Z, \langle \kappa_{e} \rangle\right) \right\} \ .
	\end{align*}
	$X \coloneqq \mbox{MaxFlow}\left(a \rightarrow Z, \langle \kappa_{e} \rangle \right)$ and $Y \coloneqq \kappa_{f}$ are independent random variables, say on probability spaces $(\Omega_{1},\mu_{1})$ and $(\Omega_{2},\mu_{2})$. Let $C \coloneqq \mbox{Conduct}\left(a \rightarrow Z, \langle c_{e} \rangle \right)$. Then
	\begin{align*}
	\mathbb{E}\left[\mbox{MaxFlow}\left(0 \rightarrow Z, \langle \kappa_{e} \rangle \right) \right] &= 
	\int_{\Omega_{1}} \int_{\Omega_{2}} \min \left\{X(\omega_{1}) , Y(\omega_{2}) \right\} \mu_{2}(d\omega_{2}) \mu_{1}(d\omega_{1}) \\
	&=\int_{\Omega_{1}} 1-e^{-X(\omega_{1})} \mu_{1}(d\omega_{1}) 
	\leq 1-e^{-\mathbb{E}\left[X\right]} \leq 1-e^{-2C} \\
	&\leq 2\frac{C}{1+C} = 2 \mbox{Conduct}\left(0 \rightarrow Z, \langle c_{e} \rangle \right) \ .
	\end{align*}
	The first inequality follows by Jensen's inequality, the second inequality by the induction assumption. The last inequality is equivalent to
	\begin{equation*}
	(1-C)e^{2C} \leq 1+C \ .
	\end{equation*}
	For $C \geq 1$ this is clearly true, for $0 \leq C < 1$ the result is obtained by dividing by $1-C$ on both sides and developing the functions as power series. This concludes the proof.
\end{proof}

Having the theorem above at hand, we can prove the following:

\begin{corollary}\label{C6}
	Let T=(V,E) be a locally finite infinite tree and $(\kappa_{e})_{e \in E}$ be independent and exponentially distributed with mean 1. Let $\nu$ be the associated probability measure. Then the following are equivalent:\newline
	i) MaxFlow$(0 \rightarrow \infty, \langle \kappa_{e} \rangle)=0 \ \nu^{E}$-a.s.\newline
	ii) The simple random walk on T = (V,E) is recurrent a.s.
\end{corollary}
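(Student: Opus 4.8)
The plan is to exhaust $T$ by the finite subtrees $T_{\le n}$, apply Theorem~\ref{T4} to each of them, and then let $n\to\infty$ on both sides of the resulting estimate.

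Recall the standing assumption of this section that $T$ has no finite branches, so that for every $n\ge 1$ the set $Z_n := V_{\le n}\setminus V_{\le n-1}$ of vertices at distance exactly $n$ from $0$ is precisely the leaf set of $T_{\le n}$. Applying Theorem~\ref{T4} to the finite tree $T_{\le n}$ with root $0$, leaf set $Z_n$, and $c_e\equiv 1$ (the common mean of the $\kappa_e$) gives, for every $n$,
\[
\mathrm{Conduct}(0\to Z_n,\langle 1\rangle)\ \le\ \mathbb{E}\big[\mathrm{MaxFlow}(0\to Z_n,\langle\kappa_e\rangle)\big]\ \le\ 2\,\mathrm{Conduct}(0\to Z_n,\langle 1\rangle).
\]

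Next I would identify the $n\to\infty$ limits of the three quantities. A standard exhaustion argument — using the Max-Flow Min-Cut identity (\ref{MFMC}) together with the fact that, $T$ being locally finite, every cutset contains a finite one (so that every cutset separating $0$ from $\infty$ already lives inside some $T_{\le n}$) — shows that $n\mapsto\mathrm{MaxFlow}(0\to Z_n,\langle\kappa_e\rangle)$ is non-increasing and converges pointwise a.s.\ to $\mathrm{MaxFlow}(0\to\infty,\langle\kappa_e\rangle)$. Since $\mathrm{MaxFlow}(0\to Z_1,\langle\kappa_e\rangle)$ is bounded by the sum of the finitely many $\kappa_e$ incident to $0$, it is integrable, so the monotone convergence theorem yields $\mathbb{E}[\mathrm{MaxFlow}(0\to Z_n,\langle\kappa_e\rangle)]\downarrow\mathbb{E}[\mathrm{MaxFlow}(0\to\infty,\langle\kappa_e\rangle)]$. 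On the other hand, $\mathrm{Conduct}(0\to Z_n,\langle 1\rangle)$ decreases, by definition of the effective conductance to infinity, to $\mathrm{Conduct}(0\to\infty,\langle 1\rangle)$, and by the classical electric-network criterion for random walks (see e.g.\ \cite{lyons2017probability}) this limit equals $0$ if and only if the simple random walk on $T$ is recurrent.

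Passing to the limit in the displayed double inequality then gives
\[
\mathrm{Conduct}(0\to\infty,\langle 1\rangle)\ \le\ \mathbb{E}\big[\mathrm{MaxFlow}(0\to\infty,\langle\kappa_e\rangle)\big]\ \le\ 2\,\mathrm{Conduct}(0\to\infty,\langle 1\rangle),
\]
so $\mathbb{E}[\mathrm{MaxFlow}(0\to\infty,\langle\kappa_e\rangle)]=0$ precisely when $\mathrm{Conduct}(0\to\infty,\langle 1\rangle)=0$, i.e.\ precisely when the walk is recurrent. Finally, since $\mathrm{MaxFlow}(0\to\infty,\langle\kappa_e\rangle)\ge 0$, its expectation vanishes exactly when the random variable itself is $0$ almost surely (one may also recall from the discussion after Theorem~\ref{T1} that this is a zero-one event), which is the claimed equivalence of i) and ii). The only point that needs genuine care is this passage to the limit: verifying that $\mathrm{MaxFlow}(0\to\infty,\langle\kappa_e\rangle)$ really is the monotone limit of the $\mathrm{MaxFlow}(0\to Z_n,\langle\kappa_e\rangle)$ — this is where finiteness of minimal cutsets enters — and supplying the (easy) integrability bound from local finiteness that licenses exchanging limit and expectation; everything else is a direct combination of Theorem~\ref{T4} with standard facts about electric networks.
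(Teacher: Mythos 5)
Your proposal is correct and follows essentially the same route as the paper: apply Theorem \ref{T4} to the finite truncations with $Z = V_n$, use the bound $\mathrm{MaxFlow}(0\to V_n,\langle\kappa_e\rangle)\le\sum_{x:0\to x}\kappa_{(0,x)}$ to justify exchanging limit and expectation, and let $n\to\infty$ to compare $\mathbb{E}[\mathrm{MaxFlow}(0\to\infty,\langle\kappa_e\rangle)]$ with $\mathrm{Conduct}(0\to\infty,\langle 1\rangle)$. The only cosmetic difference is that you invoke monotone convergence for the decreasing sequence (with integrable first term) where the paper cites dominated convergence; both are licensed by the same local-finiteness bound.
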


\begin{proof}
	Take $Z = V_{n} \coloneqq \left\{x \in V : |x| = n \right\}$. As we assume as always that $T$ is a tree without finite branches we get the inequalities
	\begin{equation*}
	\mathbb{E}[\mbox{MaxFlow}(0 \rightarrow V_{n}, \langle \kappa_{e} \rangle)] \geq
	\mbox{Conduct}(0 \rightarrow V_{n}, \langle 1 \rangle)
	\end{equation*}	
	and
	\begin{equation*}
	\mathbb{E}[\mbox{MaxFlow}(0 \rightarrow V_{n}, \langle \kappa_{e} \rangle)] \leq
	2\mbox{Conduct}(0 \rightarrow V_{n}, \langle 1 \rangle) \ .
	\end{equation*}
	Now take $n \rightarrow \infty$ on both sides. Note that 
	\begin{equation*}
	\mbox{MaxFlow}(0 \rightarrow V_{n}, \langle \kappa_{e} \rangle) \leq 
	\sum_{\{x \in V : 0 \rightarrow x\}}\kappa_{(0,x)} 
	\end{equation*}
	which has finite expectation. So by dominated convergence we can interchange limit and expectation on the left side of the inequalities. Hence we get 
	\begin{equation*}
	\mathbb{E}[\mbox{MaxFlow}(0 \rightarrow \infty, \langle \kappa_{e} \rangle)] \geq
	\mbox{Conduct}(0 \rightarrow \infty, \langle 1 \rangle)
	\end{equation*}	
	and
	\begin{equation*}
	\mathbb{E}[\mbox{MaxFlow}(0 \rightarrow \infty, \langle \kappa_{e} \rangle)] \leq
	2\mbox{Conduct}(0 \rightarrow \infty, \langle 1 \rangle) \ .
	\end{equation*}
	So if the Simple Random Walk is recurrent $\mathbb{E}[\mbox{MaxFlow}(0 \rightarrow \infty, \langle \kappa_{e} \rangle)] = 0$ and therefore $\mbox{MaxFlow}(0 \rightarrow \infty, \langle \kappa_{e} \rangle) = 0$ almost surely. If the Simple Random Walk is transient $\mathbb{E}[\mbox{MaxFlow}(0 \rightarrow \infty, \langle \kappa_{e} \rangle)] > 0$ and therefore $\mbox{MaxFlow}(0 \rightarrow \infty, \langle \kappa_{e} \rangle) > 0$ a.s..	
	
\end{proof}

\subsection{Maximal Flows for more general couplings}\label{2.3}

In the sections above we saw that there is a 1-to-1 correspondence between the maximal flow and the number of ground states for any distribution of linear growth and there is a connection between the maximal flow and recurrence/transience properties of the simple random walk on the tree, if $\nu$ is the law of an exponential distribution. The goal of this section is to prove Corollary \ref{C6} for all distributions of linear growth.

\begin{theorem}\label{T7}
	Let $\nu$ be a distribution of linear growth and let $(J_{e})_{e \in E}$ be i.i.d. with distribution $\nu$. Then the following are equivalent: \newline
	i) MaxFlow$(0 \rightarrow \infty, \langle |J_{e}| \rangle)=0 \ \nu^{E}$-a.s.\newline
	ii) The simple random walk on T = (V,E) is recurrent a.s.
\end{theorem}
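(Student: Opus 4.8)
The plan is to derive Theorem~\ref{T7} from the exponential case, Corollary~\ref{C6}, by a stochastic comparison argument: I will sandwich a suitably truncated version of the couplings $|J_e|$ between two rescaled i.i.d.\ mean-$1$ exponential families and then exploit that $\mathrm{MaxFlow}(0\rightarrow\infty,\cdot)$ is monotone in, and positively homogeneous in, the capacities.

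The first ingredient is the observation that vanishing of the maximal flow is insensitive to capping the capacities: for any nonnegative $(\lambda_e)_{e\in E}$ and any $t>0$,
\[
\mathrm{MaxFlow}(0\rightarrow\infty,\langle\lambda_e\rangle)=0\quad\Longleftrightarrow\quad\mathrm{MaxFlow}(0\rightarrow\infty,\langle\min(\lambda_e,t)\rangle)=0 .
\]
The direction ``$\Rightarrow$'' is immediate since $\min(\lambda_e,t)\le\lambda_e$. For ``$\Leftarrow$'', use the Max-Flow Min-Cut identity (\ref{MFMC}): choose cutsets $\Pi_n$ with $\sum_{e\in\Pi_n}\min(\lambda_e,t)\to 0$; as soon as this sum is $<t$ we must have $\lambda_e<t$ for every $e\in\Pi_n$, so $\min(\lambda_e,t)=\lambda_e$ on $\Pi_n$ and $\sum_{e\in\Pi_n}\lambda_e\to 0$ as well.

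Next, fix $0<c<C<\infty$ and $\epsilon_0>0$ with $c\epsilon\le\nu((-\epsilon,\epsilon))\le C\epsilon$ for all $\epsilon\le\epsilon_0$; shrinking $\epsilon_0$ we may additionally assume $\epsilon_0\le\tfrac1{2C}$. Let $(\kappa_e)_{e\in E}$ be i.i.d.\ exponential with mean $1$. Comparing distribution functions, and using only the elementary inequalities $1-x\le e^{-x}$ and $1-e^{-u}\ge u/2$ for $0\le u\le 1$ together with the linear-growth bounds, one checks the two stochastic dominations
\[
\epsilon_0\min(\kappa_e,1)\ \preceq_{\mathrm{st}}\ \min(|J_e|,\epsilon_0)\ \preceq_{\mathrm{st}}\ \tfrac1c\,\kappa_e .
\]
Since $T$ is locally finite, hence $E$ countable, and all variables are independent across edges, each domination can be realized by a single coupling, edge by edge, which makes the inequality hold simultaneously for all $e\in E$; on that coupling, monotonicity and positive homogeneity of the maximal flow give, pathwise,
\[
\epsilon_0\,\mathrm{MaxFlow}(0\rightarrow\infty,\langle\min(\kappa_e,1)\rangle)\ \le\ \mathrm{MaxFlow}(0\rightarrow\infty,\langle\min(|J_e|,\epsilon_0)\rangle)\ \le\ \tfrac1c\,\mathrm{MaxFlow}(0\rightarrow\infty,\langle\kappa_e\rangle) .
\]
A coupling preserves the marginal laws, so this chain transfers a.s.\ statements between the $(|J_e|)$-world and the $(\kappa_e)$-world. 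Combining it with the capping observation and Corollary~\ref{C6} yields both implications: if $\mathrm{MaxFlow}(0\rightarrow\infty,\langle|J_e|\rangle)=0$ $\nu^E$-a.s., then $\mathrm{MaxFlow}(0\rightarrow\infty,\langle\min(|J_e|,\epsilon_0)\rangle)=0$ a.s., hence $\mathrm{MaxFlow}(0\rightarrow\infty,\langle\min(\kappa_e,1)\rangle)=0$ a.s., hence $\mathrm{MaxFlow}(0\rightarrow\infty,\langle\kappa_e\rangle)=0$ a.s., and the simple random walk is recurrent; conversely, recurrence forces $\mathrm{MaxFlow}(0\rightarrow\infty,\langle\kappa_e\rangle)=0$ a.s., hence $\mathrm{MaxFlow}(0\rightarrow\infty,\langle\min(|J_e|,\epsilon_0)\rangle)=0$ a.s., hence $\mathrm{MaxFlow}(0\rightarrow\infty,\langle|J_e|\rangle)=0$ a.s.

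I expect the only real, and still modest, difficulty to be the choice of truncation level. A coupling of $|J_e|$ with an exponential is obstructed ``at infinity'' --- $|J_e|$ may have a heavy tail, or conversely bounded support --- so one cannot compare the untruncated variables; one first has to cut both sides off at a level small enough that the linear-growth estimates control the two distribution functions in both directions, and then separately argue that this truncation does not affect whether the maximal flow to $\infty$ is zero, which is precisely what the capping observation provides.
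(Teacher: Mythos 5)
Your proposal is correct and is essentially the paper's own argument: both reduce to Corollary \ref{C6} by sandwiching the (truncated) couplings between linear rescalings of a reference family, using the monotonicity, homogeneity and truncation-invariance of the maximal flow from Lemma \ref{L9}. The only cosmetic difference is that the paper realizes the comparison via the quantile function of $|J_e|\wedge 1$ and routes through the uniform distribution as an intermediary (applying the sandwich once for $\nu$ and once for $\exp(1)$), whereas you verify the two stochastic dominations directly against the exponential; your CDF checks and the capping observation are sound.
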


With this theorem we can prove the following corollary:

\begin{corollary}\label{C8}
	Let $\nu$ be a distribution which is absolutely continuous with respect to the Lebesgue measure $\lambda$ on $\R$. Suppose that $f = \frac{d\nu}{d\lambda}$ is continuous at $0$ and $0 < f(0) < \infty$. Let $(J_{e})_{e \in E}$ be i.i.d. with law $\nu$. Then MaxFlow$(0 \rightarrow \infty, \langle |J_{e}| \rangle) = 0$ a.s. if and only if the simple random walk on T is recurrent a.s..
\end{corollary}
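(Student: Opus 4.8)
The plan is to deduce Corollary \ref{C8} from Theorem \ref{T7} by showing that a distribution $\nu$ whose Lebesgue density $f$ is continuous at $0$ with $0 < f(0) < \infty$ is a distribution of linear growth, i.e. that $\nu((-\epsilon,\epsilon)) = \Theta(\epsilon)$ as $\epsilon \to 0$. Once this is established, Theorem \ref{T7} applies verbatim to $(J_e)_{e \in E}$ and gives exactly the asserted equivalence. So the entire content of the proof is the elementary verification of the growth condition.

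First I would write $\nu((-\epsilon,\epsilon)) = \int_{-\epsilon}^{\epsilon} f(t)\, dt$, which is valid since $\nu$ is absolutely continuous with density $f$. Fix the continuity of $f$ at $0$: given $\delta := f(0)/2 > 0$ there is $\epsilon_0 > 0$ such that $|f(t) - f(0)| < \delta$ for all $|t| < \epsilon_0$, hence $f(0)/2 \le f(t) \le 3f(0)/2$ for all such $t$. Then for every $\epsilon < \epsilon_0$ one has
\begin{equation*}
f(0)\,\epsilon = \frac{f(0)}{2}\cdot 2\epsilon \le \int_{-\epsilon}^{\epsilon} f(t)\, dt \le \frac{3f(0)}{2}\cdot 2\epsilon = 3f(0)\,\epsilon \ ,
\end{equation*}
so that with $c := f(0) > 0$ and $C := 3f(0) < \infty$ we get $c\,\epsilon \le \nu((-\epsilon,\epsilon)) \le C\,\epsilon$ for all $\epsilon$ small enough. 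This is precisely the defining property of a distribution of linear growth.

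Having checked this, I would simply invoke Theorem \ref{T7}: since $\nu$ is a distribution of linear growth and the $(J_e)_{e\in E}$ are i.i.d.\ with law $\nu$, statement $i)$ of that theorem — $\mathrm{MaxFlow}(0 \rightarrow \infty, \langle |J_e| \rangle) = 0$ $\nu^E$-a.s. — is equivalent to statement $ii)$, recurrence of the simple random walk on $T$. This is exactly the claim of the corollary, so the proof concludes here.

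There is essentially no obstacle: the only thing to be careful about is not to claim more than continuity at the single point $0$ gives — in particular $f$ need not be bounded or even locally integrable away from $0$, but that is irrelevant because the growth condition and Theorem \ref{T7} only constrain the behavior of $\nu$ near $0$ (and, separately, require $\nu(\{0\}) = 0$, which is automatic here since $\nu$ is absolutely continuous). So the whole argument is the two-line estimate above followed by citing Theorem \ref{T7}.
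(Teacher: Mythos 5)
Your proof is correct and follows exactly the same route as the paper: use continuity of $f$ at $0$ to sandwich $f$ near $0$ between positive multiples of $f(0)$, conclude that $\nu$ is a distribution of linear growth, and then invoke Theorem \ref{T7}. The only difference is the choice of constants ($c = f(0)$, $C = 3f(0)$ versus the paper's $f(0)$ and $4f(0)$), which is immaterial.
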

\begin{proof}
	Take $\delta > 0$ small enough such that $f((-\delta , \delta)) \subset (\frac{f(0)}{2} , 2f(0))$. Then for every $\epsilon < \delta$
	\begin{equation*}
	\nu((-\epsilon , \epsilon)) = \int_{-\epsilon}^{\epsilon} f(s) ds =
	\begin{cases}
	\geq \epsilon f(0) \\
	\leq 4\epsilon f(0)
	\end{cases} \ .
	\end{equation*}
	So $\nu$ is a distribution of linear growth. Now use Theorem \ref{T7} to conclude.
\end{proof}

Before proving Theorem \ref{T7} we have to deduce some properties of the maximal flow. 

\begin{lemma}\label{L9}
	Let $\mu$ and $(\kappa_{e})_{e \in E}$ be nonnegative real numbers. Then the following holds:\newline
	i) MaxFlow$(0 \rightarrow \infty, \langle \mu \kappa_{e} \rangle) = \mu \cdot$MaxFlow$(0 \rightarrow \infty, \langle \kappa_{e} \rangle)$ \newline
	ii) MaxFlow$(0 \rightarrow \infty, \langle \kappa_{e} \rangle) > 0$ if and only if MaxFlow$(0 \rightarrow \infty, \langle \lambda \kappa_{e} \rangle) >0$ for every $\lambda>0$\newline
	iii) Suppose $(\zeta_{e})_{e \in E}$ is another set of nonnegative real numbers satisfying $\kappa_{e} \leq \zeta_{e} \ \forall$ e $\in$ E. Then 
	\begin{equation*}
	{\mathrm{MaxFlow}}(0 \rightarrow \infty, \langle \kappa_{e} \rangle) \leq {\mathrm{MaxFlow}}(0 \rightarrow \infty, \langle \zeta_{e} \rangle)
	\end{equation*}
	iv) MaxFlow$(0 \rightarrow \infty, \langle \kappa_{e} \rangle) > 0$ if and only if MaxFlow$(0 \rightarrow \infty, \langle \kappa_{e}\wedge 1 \rangle) > 0$ 
\end{lemma}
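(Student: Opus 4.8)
The plan is to dispatch the four items one at a time, drawing on both the definition of a flow and the Max-Flow Min-Cut identity (\ref{MFMC}) as convenient, and to record at the outset one elementary structural fact about flows on a locally finite tree that is needed for iv). Namely: every flow $\theta$ w.r.t. nonnegative capacities has finite strength (the flow out of $0$ is a sum over the finitely many edges incident to $0$, each bounded by its capacity) and, moreover, $\theta(e) \le \mathrm{strength}(\theta)$ for every $e \in E$. The latter follows by inducting down the tree: flow conservation at each vertex $y \neq 0$ says the flow arriving along the (unique) parent edge of $y$ equals the total flow leaving $y$, so each child edge carries at most as much as the parent edge, and the root edges carry at most $\mathrm{strength}(\theta)$ in total.

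For i), if $\mu = 0$ both sides are $0$; if $\mu > 0$ the map $\theta \mapsto \mu\theta$ is a bijection from flows w.r.t. $\langle \kappa_e \rangle$ to flows w.r.t. $\langle \mu\kappa_e \rangle$ (conservation is linear and $\theta(e) \le \kappa_e \iff \mu\theta(e) \le \mu\kappa_e$), and it multiplies strength by $\mu$, so taking suprema gives the identity; equivalently, each cutset sum $\sum_{e \in \Pi}\kappa_e$ scales by $\mu$ and one applies (\ref{MFMC}). Item ii) is then immediate: by i), $\mathrm{MaxFlow}(0 \to \infty, \langle \lambda\kappa_e \rangle) = \lambda\,\mathrm{MaxFlow}(0 \to \infty, \langle \kappa_e \rangle)$ with $\lambda > 0$, so one side is positive iff the other is, for any, hence every, $\lambda > 0$. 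For iii), every flow $\theta$ w.r.t. $\langle \kappa_e \rangle$ is also a flow w.r.t. $\langle \zeta_e \rangle$, since $\theta(e) \le \kappa_e \le \zeta_e$; thus the feasible set only grows and the supremum cannot decrease (or: $\sum_{e \in \Pi}\kappa_e \le \sum_{e \in \Pi}\zeta_e$ for every cutset $\Pi$, then (\ref{MFMC})).

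For iv), the implication ``$\Leftarrow$'' is iii) applied to $\kappa_e \wedge 1 \le \kappa_e$. For ``$\Rightarrow$'', assume $\mathrm{MaxFlow}(0 \to \infty, \langle \kappa_e \rangle) > 0$; by (\ref{MFMC}) there is $\delta > 0$ with $\sum_{e \in \Pi}\kappa_e \ge \delta$ for every cutset $\Pi$. Fix a cutset $\Pi$. If some $e \in \Pi$ has $\kappa_e \ge 1$, then $\sum_{e \in \Pi}(\kappa_e \wedge 1) \ge 1$; otherwise $\kappa_e \wedge 1 = \kappa_e$ throughout $\Pi$, so $\sum_{e \in \Pi}(\kappa_e \wedge 1) = \sum_{e \in \Pi}\kappa_e \ge \delta$. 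In either case the sum is at least $\min(\delta,1) > 0$, so the infimum over cutsets is positive and $\mathrm{MaxFlow}(0 \to \infty, \langle \kappa_e \wedge 1 \rangle) > 0$ by (\ref{MFMC}). (An alternative to the cutset argument: take a flow $\theta$ of positive, necessarily finite, strength $s$ and replace it by $\theta/(s \vee 1)$, which is bounded by $1$ using $\theta(e) \le s$, bounded by $\kappa_e$ using $\theta(e) \le \kappa_e$, and has positive strength.) None of these steps is deep; the only one requiring a genuine if small idea is this forward direction of iv), where one must either split a cutset according to whether it meets $\{\kappa_e \ge 1\}$ or invoke the pointwise bound $\theta(e) \le \mathrm{strength}(\theta)$ valid on trees.
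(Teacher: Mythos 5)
Your proof is correct. Items i)--iii) are handled exactly as in the paper: the MaxFlow--MinCut identity (\ref{MFMC}) together with positive homogeneity and monotonicity of the cutset infimum (or, equivalently, the scaling/inclusion of the feasible sets of flows). The only place where you diverge is the forward direction of iv). The paper argues on the primal side: take a flow $\theta$ of positive strength, scale it by a small $\lambda>0$ so that $\mathrm{strength}(\lambda\theta)\le 1$, and conclude $\lambda\theta(e)\le 1$ for every edge --- a step that silently uses the tree-specific fact $\theta(e)\le\mathrm{strength}(\theta)$, which you state and justify explicitly at the outset (and your parenthetical alternative is precisely this argument). Your main argument instead works on the dual side, splitting each cutset according to whether it contains an edge with $\kappa_e\ge 1$, which yields the lower bound $\min(\delta,1)$ on every cutset sum. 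This dual route has the small advantage of not relying on any tree structure (it works verbatim on general graphs, given (\ref{MFMC})), while the paper's primal route is shorter once the pointwise bound on flows along a tree is taken for granted. Both are complete and correct.
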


\begin{proof}
	$i)$ and $iii)$ follow from the Max-Flow Min-Cut - Theorem (\ref{MFMC}) and positive homogenity of the infimum. $ii)$ follows directly from $i)$. Ad $iv)$:\newline \noindent Let $\theta$ be a flow with respect to $\kappa_{e}$. Let $\lambda > 0$ be small enough such that $ Strength(\lambda\theta) \leq 1$. Then $\lambda\theta(e)\leq 1 \ \forall e \in E$ and $\lambda\theta$ is a non zero flow with respect to $\kappa_{e}\wedge 1$. The other direction follows from $\kappa_{e}\wedge 1 \leq \kappa_{e}$ and $iii)$.	
\end{proof}

For the proof of Theorem \ref{T7} we use the quantile function. Let $X$ be a nonnegative random variable with distribution function $F(x) = \mathbb{P}(X\leq x)$. Then the function $Q:(0,1) \rightarrow \mathbb{R}$ defined by
\begin{equation*}
Q(p) = \inf\left\{x : p \leq F(x) \right\}
\end{equation*}
is called the quantile function of the random variable $X$. Now assume that $U$ is uniformly distributed on the interval (0,1). Then $Q(U)$ has the same distribution as $X$.\\

With this we are now ready to prove Theorem \ref{T7}.

\begin{proof}
	Let $(I_{e})_{e \in E}$ be i.i.d. with uniform distribution on the interval $(0,1)$. Let $F$ be the distribution function of $|J_{e}| \wedge 1$. Then $F(\epsilon) = \Theta(\epsilon)$ for $\epsilon \rightarrow 0$. Let $Q$ be the quantile function of $F$. Then we can find $0 < \lambda_{1}$ and $\lambda_{2} < \infty$ such that $\lambda_{1}s \leq Q(s) \leq \lambda_{2}s \ \forall s \in (0,1)$. $\lambda_{1}$ and $\lambda_{2}$ correspond to the grey lines in Figure \ref{Fig4}. Using $iii)$ in Lemma \ref{L9} we obtain
	\begin{equation*}
	\mbox{MaxFlow} (0 \rightarrow \infty, \langle \lambda_{1} I_{e} \rangle) \leq
	\mbox{MaxFlow} (0 \rightarrow \infty, \langle Q(I_{e}) \rangle) \leq
	\mbox{MaxFlow} (0 \rightarrow \infty, \langle \lambda_{2} I_{e} \rangle) \ .
	\end{equation*}
	As $(Q(I_{e}))_{e \in E}$ and $(|J_{e}| \wedge 1)_{e \in E}$ have the same distribution $MaxFlow (0 \rightarrow \infty, \langle I_{e} \rangle) = 0$ a.s. if and only if $\mbox{MaxFlow} (0 \rightarrow \infty, \langle |J_{e}| \wedge 1 \rangle) = 0$ a.s., which is equivalent to $\mbox{MaxFlow} (0 \rightarrow \infty, \langle |J_{e}| \rangle) = 0$ a.s., due to $iv)$ in Lemma \ref{L9}. So by applying this argument twice we get that $\mbox{MaxFlow} (0 \rightarrow \infty, \langle |J_{e}| \rangle) = 0$ a.s. if and only if $\mbox{MaxFlow} (0 \rightarrow \infty, \langle \kappa_{e} \rangle) = 0$ a.s., where $(\kappa_{e})_{e\in E}$ are i.i.d. exp(1)-distributed, which is equivalent to recurrence of the simple random walk, due to Corollary \ref{C6}.
\end{proof}

\begin{figure}
	\[\begin{tikzpicture}[scale = 3.]
	\vertex[draw=none] (y1) at (0,-0.2) {};
	\vertex[draw=none] (y2) at (0,1.2) [label=left:$F(x)$] {};
	\vertex[draw=none] (x1) at (-0.2,0) {};
	\vertex[draw=none] (x2) at (1.2,0) [label=below:$x$]{};
	
	\vertex[draw=none] (ym1) at (-0.06,1) {}; \vertex[draw=none] (ym2) at (0.06,1) [label=left:{\footnotesize 1}] {};
	\vertex[draw=none] (xm1) at (1,-0.06) [label=below :{\footnotesize 1}] {}; \vertex[draw=none] (xm2) at (1,0.06) {};
	
	\draw[thick,color=black] plot[samples=200, domain=0:0.3] 
	(\x,{(\x+0.5)^4-0.0625});
	\draw[thick,color=black] plot[samples=200, domain=0.3:0.5] 
	(\x,{0.5*(\x-0.3)+0.3471});
	\draw[thick,color=black] plot[samples=200, domain=0.5:1] 
	(\x,{7*(\x-0.75)^3+0.75});
	\draw[thick,color=black] plot[samples=200, domain=1:1.2] 
	(\x,{1});

	\path
	(y1) edge[->] (y2)
	(x1) edge[->] (x2)
	
	(xm1) edge (xm2)
	(ym1) edge (ym2)
	
	;
	
	\vertex[draw=none] (qy1) at (0+2,-0.2) {};
	\vertex[draw=none] (qy2) at (0+2,1.2) [label=left:$Q(s)$] {};
	\vertex[draw=none] (qx1) at (-0.2+2,0) {};
	\vertex[draw=none] (qx2) at (1.2+2,0) [label=below:$s$]{};
	
	\vertex[draw=none] (qym1) at (-0.06+2,1) {}; \vertex[draw=none] (qym2) at (0.06+2,1) [label=left:{\footnotesize 1}] {};
	\vertex[draw=none] (qxm1) at (1+2,-0.06) [label=below :{\footnotesize 1}] {}; \vertex[draw=none] (qxm2) at (1+2,0.06) {};
	
	\draw[thick,color=black] plot[samples=200, domain=0:0.3] 
	({(\x+0.5)^4-0.0625+2},\x);
	\draw[thick,color=black] plot[samples=200, domain=0.3:0.5] 
	({0.5*(\x-0.3)+0.3471+2},\x);
	\draw[thick,color=black] plot[samples=200, domain=0.5:1.004] 
	({7*(\x-0.75)^3+0.75+2},\x);
	
	\draw[thick,color=black] plot[samples=200, domain=0.4465:0.640625] 
	(\x+2,{0.5});
	\draw[thick,color=black] plot[samples=200, domain=0.855:1] 
	(\x+2,{1});
	
	\draw[thick,color=lightgray] plot[samples=200, domain=0:1] 
	(\x+2,0.6*\x);
	\draw[thick,color=lightgray] plot[samples=200, domain=0:0.6] 
	(\x+2,2.2*\x);
	
	\path
	(qy1) edge[->] (qy2)
	(qx1) edge[->] (qx2)
	
	(qxm1) edge (qxm2)
	(qym1) edge (qym2)
	
	;
	
	\end{tikzpicture}\]
	\centering
	\caption{A typical distribution and quantile function}
	\label{Fig4}
	
\end{figure}
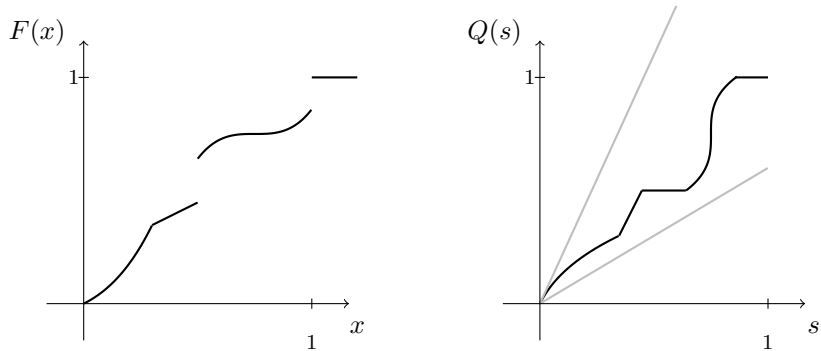

So we have seen that for distributions of linear growth there is a connection between the maximal flow and the conductance. It is a natural question to ask, whether this holds true for all absolutely continuous distributions with support at $0$. In fact it does not hold true. In chapter 3 we will give an example of a tree $T$ and an absolutely continuous distribution $\nu$, such that the simple random walk on $T$ is transient, but the maximal flow from 0 to $\infty$ with respect to some capacities $|J_{e}|$, which are i.i.d. with law $\nu$, equals 0 almost surely.

\subsection{Completing the proof}\label{2.4}

Up to now we did all proofs assuming that $T$ has no finite branches. We only assumed this for technical reasons, Theorem \ref{T1} holds true for every infinite tree. To see this, note that for any tree $T = (V,E)$ we can define a new tree $\tilde{T} = (\tilde{V},\tilde{E})$ by 
\begin{equation*}
\tilde{V} = \left\{x \in V : T_{\succeq x} \mbox{ contains infinitely many vertices } \right\}
\end{equation*}
and
\begin{equation*}
\tilde{E} = (\tilde{V} \times \tilde{V}) \cap E \ .
\end{equation*}
$\tilde{T}$ is another locally finite tree with $|\tilde{V}|=\infty$. It is the tree obtained by removing all finite branches from $T$, we will call $\tilde{T}$ also the {\sl backbone} of the tree. For some nonzero coupling values $(\kappa_{e})_{e \in E}$ the number of ground states for $T=(V,E)$ is exactly the number of ground states of the tree $\tilde{T} = (\tilde{V},\tilde{E})$ with coupling values $(\kappa_{e})_{e \in \tilde{E}}$. Removing the finite branches does not change the maximal flow from 0 to $\infty$, as the flow cannot enter any finite branch. Furthermore it does not change recurrence or transience of the simple random walk on the tree. To see this note that recurrence is equivalent to the existence of a finite energy flow from the root to $\infty$. Again this flow just lives on the backbone of the tree. So we can assume without loss of generality that $T$ does not have any finite branches.\newline
The equivalence of $i), ii)$ and $iii)$ of Theorem \ref{T1} has been proven in section \ref{2.1}, the equivalence of $iii)$ and $iv)$ in section \ref{2.3}. This concludes the proof.

\section{Dependence of $|\mathcal{G}(J)|$ on the coupling distribution}

So far we saw that the number of ground states is an almost sure constant for any tree $T$ and a coupling distribution of linear growth and either 2 or $\infty$ by Corollary \ref{infgs}. For the case of the half plane and for many coupling distributions it has been proven by L.-P. Arguin and M. Damron in \cite{arguin2014number} that $|\mathcal{G}(J)|$ is either 2 or $\infty$ almost surely. For graphs which have some translational symmetry it can be shown, see \cite{arguin2014number}, that the number of ground states is also an almost sure constant. In section \ref{3.1} we give an example of a tree $T=(V,E)$ and a distribution $\nu$, such that $\mathbb{P}\left(|\mathcal{G}(J)|=2\right) = \mathbb{P}\left(|\mathcal{G}(J)|=4\right) = \frac{1}{2}$. To achieve this we have to drop the condition $\nu((-\epsilon,\epsilon)) > 0$ for every $ \epsilon > 0$ .\\\noindent
In section \ref{3.2} we give an example of a tree $T$ with two different distributions such that the number of ground states is 2 almost surely or $\infty$ almost surely, depending on the coupling distribution. For this we have to drop the linear growth condition for one of the two distributions.

\subsection{Randomness of $|\mathcal{G}(J)|$}\label{3.1}

\begin{figure}
	\[\begin{tikzpicture}[scale = 0.4,- ,>= latex ]
	\vertex[fill] (l) at (-1,0) {};
	\vertex[fill] (r) at (1,0) {};
	\vertex[fill] (ro1) at (2.732,1) {};
	\vertex[fill] (ro2) at (4.464,2) {};
	\vertex[fill] (ru1) at (2.732,-1) {};
	\vertex[fill] (ru2) at (4.464,-2) {};
	\vertex[fill] (lo1) at (-2.732,1) {};
	\vertex[fill] (lo2) at (-4.464,2) {};
	\vertex[fill] (lu1) at (-2.732,-1) {};
	\vertex[fill] (lu2) at (-4.464,-2) {};
	
	\vertex[draw=none] (lu3) at (-6.196,-3) {};
	\vertex[draw=none] (ru3) at (6.196,-3) {};
	\vertex[draw=none] (lo3) at (-6.196,3) {};
	\vertex[draw=none] (ro3) at (6.196,3) {};
	
	\path
	(l) edge node[above=0] {f} (r)
	(l) edge (lo1)
	(l) edge (lu1)
	(r) edge (ro1)
	(r) edge (ru1)
	
	(lo1) edge (lo2)
	(lu1) edge (lu2)
	(ro1) edge (ro2)
	(ru1) edge (ru2)
	
	(lo3) edge[dotted] (lo2)
	(lu3) edge[dotted] (lu2)
	(ro3) edge[dotted] (ro2)
	(ru3) edge[dotted] (ru2)
	
	;
	\end{tikzpicture}\]
	\centering
	\caption{}
	\label{Fig5}
	
\end{figure}

Let $\nu$ be the uniform distribution on the interval $(1,3)$ and let $T=(V,E)$ be the tree of Figure \ref{Fig5}. We have one edge $f$ in the middle and both vertices adjacent to $f$ are starting points of two halflines going to $\infty$. Here the number of ground states depends on the coupling value $J_{f}$. If $\sigma$ is a ground state, then all edges $e \in E\setminus\left\{f\right\}$ have to be satisfied, as for every $e\in E\setminus{\left\{f\right\}}$ $J_{e}>1$ and there almost surely exists an edge $h$ in the same halfline such that $J_{h}<J_{e}$. Therefore $f$ is the only edge which can be satisfied or unsatisfied in a ground state. As all couplings are positive the natural ground states are the spin configurations satisfying either $\sigma_{x} = +1 \ \forall x\in V$ or $\sigma_{x} = -1 \ \forall x\in V$. \newline
\noindent
If $J_{f} > 2$ the natural ground states are the only ground states. When $f$ is not satisfied we can almost surely find edges $h_{1}$ in the upper right and $h_{2}$ in the lower right halfline such that $J_{h_{1}}+J_{h_{2}} < J_{f}$, which contradicts (\ref{gs}). So the natural ground states are the only ground states.\newline
\noindent
If $J_{f} \leq 2$ the spin configurations where $f$ is the only unsatisfied edge are ground states. The reason for this is that for $B \subset V$ and $f\in \partial B$ there are at least two other edges in $\partial B$. Note that this are precisely the spin configurations which are $+1$ on the right-hand side of the graph and $-1$ on the left-hand side, or vice versa. Additional to those ground states the natural ones still exist, so $|\mathcal{G}(J)|=4$ in this case.

\subsection{A tree with two different coupling distributions}\label{3.2}

The main goal of this chapter is to give a tree $T$ and two equivalent distributions, such that $|\mathcal{G}(J)|$ is 2 or $\infty$ almost surely, depending from which of the two distributions the couplings are drawn.\\

Let $T$ be a tree such that the simple random walk on $T$ is recurrent. Let $(\omega_{n})_{n \in \N}$ be a summable sequence of positive real numbers. Then, as already noted in (\cite{lyons2017probability}, Chapter 3)
\begin{equation}\label{csl1}
\inf\limits_{\Pi \ cutset} \ \sum_{e \in \Pi} \omega_{|e|} = 0 \ .
\end{equation}
Otherwise there would exist a nonzero flow from zero to infinity w.r.t. the capacities $\kappa_{e} = \omega_{|e|}$ and
\begin{equation*}
\sum_{e \in E} \theta(e)^{2} = \sum_{n=0}^{\infty} \ \sum_{e \in E_{n}} \theta(e)^{2} \leq \sum_{n=0}^{\infty} \omega_{n} \sum_{e \in E_{n}} \theta(e) = ||(\omega_{n})||_{\ell^{1}(\N)} \cdot  \mbox{strength}(\theta) < \infty
\end{equation*}
which contradicts recurrence of the simple random walk.	For some set of nonnegative coupling values $(\kappa_{e})_{e \in E}$ we define a new set of coupling values $(\kappa^{min}_{e})_{e \in E}$ by
\begin{equation*}
\kappa^{min}_{e} \coloneqq \min\left\{\kappa_{f} : f \in \mathcal{P}_{e} \right\}
\end{equation*}
where $\mathcal{P}_{e}$ is the path connecting $e$ and $0$. The idea is to find a condition of a similar form as (\ref{csl1}) which ensures that $MaxFlow (0 \rightarrow \infty, \langle \kappa_{e} \rangle) = 0$ a.s., where the $(\kappa_{e})_{e \in E}$ are i.i.d. and exponentially distributed with parameter 1.

\begin{lemma}\label{L10}
	Let $(\kappa_{e})_{e \in E}$ be nonnegative coupling values. Then
	\begin{equation*}
	\mbox{MaxFlow} (0 \rightarrow \infty, \langle \kappa_{e} \rangle) = \mbox{MaxFlow} (0 \rightarrow \infty, \langle \kappa^{min}_{e} \rangle) \ .
	\end{equation*}
	Even more is true: $\theta$ is a flow with respect to the capacities $\kappa_{e}$ if and only if it is a flow with respect to $\kappa^{min}_{e}$.
\end{lemma}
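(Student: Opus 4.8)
The plan is to prove the stronger assertion first, namely that a function $\theta : E \to \R_{\ge 0}$ is a flow with respect to the capacities $\langle \kappa_{e} \rangle$ if and only if it is a flow with respect to $\langle \kappa^{min}_{e} \rangle$. The claimed equality of the two maximal flows then follows immediately, because $\mathrm{MaxFlow}$ is by definition the supremum of $\mathrm{strength}(\theta)$ over the set of admissible flows, and we will have shown that the two admissibility conditions define the same set. One inclusion is trivial: since $e \in \mathcal{P}_{e}$ we have $\kappa^{min}_{e} \le \kappa_{e}$ for every edge $e$, so any $\theta$ with $\theta(e) \le \kappa^{min}_{e}$ for all $e$ also satisfies $\theta(e) \le \kappa_{e}$ for all $e$ (this also follows formally from Lemma \ref{L9} $iii)$ applied to $\kappa^{min}_{e} \le \kappa_{e}$, but we want the pointwise statement about flows, not only about the maximal flow).

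For the reverse inclusion the key step is a monotonicity property of any flow along rooted paths: if $f \preceq e$, i.e.\ $f$ lies on the path $\mathcal{P}_{e}$, then $\theta(e) \le \theta(f)$. I would prove this by induction on the number of edges of $\mathcal{P}_{e}$ strictly between $f$ and $e$. The base case $f = e$ is trivial. For the inductive step, write $e = (x,y)$ with $x \to y$ and let $e'$ be the edge of $\mathcal{P}_{e}$ that ends at $x$; then $f \preceq e'$, so $\theta(e') \le \theta(f)$ by the induction hypothesis. Since $x \neq 0$, and since in a tree $x$ has the single incoming edge $e'$, flow conservation at $x$ reads $\theta(e') = \sum_{\{z : x \to z\}} \theta((x,z)) \ge \theta((x,y)) = \theta(e)$, all summands being nonnegative and $\theta(e)$ being one of them. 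Combining the two inequalities gives $\theta(e) \le \theta(f)$.

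Granting this, suppose $\theta$ is a flow with respect to $\langle \kappa_{e} \rangle$. For any edge $e$ and any $f \in \mathcal{P}_{e}$ we then have $\theta(e) \le \theta(f) \le \kappa_{f}$, the last inequality by admissibility of $\theta$. Taking the minimum over $f \in \mathcal{P}_{e}$ yields $\theta(e) \le \min\{ \kappa_{f} : f \in \mathcal{P}_{e} \} = \kappa^{min}_{e}$, so $\theta$ is a flow with respect to $\langle \kappa^{min}_{e} \rangle$, completing the equivalence and hence the lemma.

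There is no serious obstacle here; the heart of the matter is the path-monotonicity inequality $\theta(e) \le \theta(f)$ for $f \preceq e$, and the only point requiring a little care is the tree structure — specifically that each non-root vertex has exactly one incoming edge, which is what makes flow conservation at $x$ collapse the incoming side to the single term $\theta(e')$. Everything else is bookkeeping.
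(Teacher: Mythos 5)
Your proof is correct and follows essentially the same route as the paper: the easy direction from $\kappa^{min}_{e}\le\kappa_{e}$, and the converse from the monotonicity $\theta(e)\le\theta(f)$ for $f\preceq e$. The only difference is that you spell out by induction (via flow conservation and the fact that each non-root vertex of a tree has a unique incoming edge) the monotonicity step that the paper simply asserts.
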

\begin{proof}
	If $\theta$ is a flow w.r.t. $\kappa^{min}_{e}$ then it is also a flow w.r.t. $\kappa_{e}$, as $\kappa^{min}_{e} \leq \kappa_{e} \ \forall e \in E.$
	For the converse direction let $f$ be an edge such that $f \in \mathcal{P}_{e}$ and $\kappa_{f} = \kappa^{min}_{e}$. Then $f \preceq e$ and hence $\theta(f) \geq \theta(e)$, so $\theta(e) \leq \theta(f) \leq \kappa_{f} = \kappa^{min}_{e}$ and hence $\theta$ is also a flow with respect to $\kappa^{min}_{e}$.
\end{proof}

\begin{theorem}\label{T11}
	Let $(\kappa_{e})_{e \in E}$ be i.i.d. with distribution $\nu$, where $\nu$ is the law of an exponentially distributed random variable with mean 1, and let T be a tree such that $\inf\limits_{\Pi \ cutset} \ \sum_{e \in \Pi}  \frac{1}{|e|} = 0$. Then MaxFlow$(0 \rightarrow \infty, \langle \kappa_{e} \rangle) = 0$ $\nu^{E}$-a.s. and hence all statements of Theorem \ref{T1} hold true.
\end{theorem}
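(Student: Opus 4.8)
The plan is to reduce, via Lemma~\ref{L10}, to the capacities $\kappa_e^{\min}$, whose expectations decay like $1/|e|$, and then to bound the max-flow in expectation using the Max-Flow Min-Cut identity~(\ref{MFMC}) together with the hypothesis on $T$.

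By Lemma~\ref{L10}, $\mathrm{MaxFlow}(0\to\infty,\langle\kappa_e\rangle)=\mathrm{MaxFlow}(0\to\infty,\langle\kappa_e^{\min}\rangle)$, so it suffices to show the latter vanishes almost surely. The gain from passing to $\kappa_e^{\min}$ is that $\kappa_e^{\min}=\min\{\kappa_f:f\in\mathcal P_e\}$ is the minimum of the $|e|+1$ independent mean-one exponentials indexed by the $|e|+1$ edges of $\mathcal P_e$, hence is itself exponentially distributed with parameter $|e|+1$; in particular $\mathbb E[\kappa_e^{\min}]=\tfrac{1}{|e|+1}\le\tfrac{1}{|e|}$, where the last inequality holds for $|e|\ge 1$ and trivially (with the convention $1/0=\infty$) for the root-incident edges, which any cutset of finite weight avoids anyway.

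Next I would combine this with~(\ref{MFMC}). For every fixed cutset $\Pi$ separating $0$ and $\infty$ one has $\mathrm{MaxFlow}(0\to\infty,\langle\kappa_e^{\min}\rangle)\le\sum_{e\in\Pi}\kappa_e^{\min}$ pointwise, so taking expectations and using Tonelli on the (countable, nonnegative) sum over $\Pi$,
\begin{equation*}
\mathbb E\big[\mathrm{MaxFlow}(0\to\infty,\langle\kappa_e\rangle)\big]\le\sum_{e\in\Pi}\mathbb E[\kappa_e^{\min}]=\sum_{e\in\Pi}\frac{1}{|e|+1}\le\sum_{e\in\Pi}\frac{1}{|e|}.
\end{equation*}
Taking the infimum over all cutsets $\Pi$ and invoking the assumption $\inf_{\Pi\ \mathrm{cutset}}\sum_{e\in\Pi}\tfrac{1}{|e|}=0$ gives $\mathbb E[\mathrm{MaxFlow}(0\to\infty,\langle\kappa_e\rangle)]=0$, and since the max-flow is a nonnegative random variable it equals $0$ almost surely.

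For the final assertion, note that the mean-one exponential law satisfies $\nu((-\epsilon,\epsilon))=1-e^{-\epsilon}=\Theta(\epsilon)$ as $\epsilon\to0$, so $\nu$ is a distribution of linear growth; hence $\mathrm{MaxFlow}(0\to\infty,\langle\kappa_e\rangle)=0$ a.s.\ is precisely item~iii) of Theorem~\ref{T1}, and all of i)--iv) follow. The one real idea is the reduction to $\kappa_e^{\min}$: the naive bound $\mathbb E[\mathrm{MaxFlow}]\le|\Pi|$ is useless, and the averaging built into the minimum over the path is exactly what turns the non-summable sequence $1/|e|$ into a usable estimate; the only minor care needed is the bookkeeping at the root-incident edges.
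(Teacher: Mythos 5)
Your proposal is correct and follows essentially the same route as the paper: reduce to $\kappa_e^{\min}$ via Lemma~\ref{L10}, observe that $\kappa_e^{\min}$ is exponential with mean $1/(|e|+1)$, bound the max-flow by the weight of a near-optimal cutset, and take expectations. The only (harmless) cosmetic difference is that you phrase the conclusion via an infimum over all cutsets plus the remark on root-incident edges, whereas the paper fixes an $\epsilon$-good cutset from the start; both are fine.
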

\noindent
Using the equivalences proven in Theorem \ref{T1} this also follows from a special case of \cite{collevecchio2017branching}, but we give a different proof, as we will use the same technique in a slightly different setting again at a later point.
\begin{proof}
	Let $\epsilon > 0$ and let $\Pi$ be a cutset such that $\sum_{e \in \Pi}  \frac{1}{|e|} < \epsilon$. Then 
	\begin{align*}
	\mathbb{E}[&\mbox{MaxFlow}(0 \rightarrow \infty, \langle \kappa_{e} \rangle)] = \mathbb{E}[\mbox{MaxFlow}(0 \rightarrow \infty, \langle \kappa^{min}_{e} \rangle)] \leq  \mathbb{E} \left[ \ \sum_{e \in \Pi}  \kappa^{min}_{e} \ \right] \\ 
	&=\sum_{e \in \Pi}  \mathbb{E}[\kappa^{min}_{e}] = \sum_{e\in\Pi} \mathbb{E}\left[\min_{f\in\mathcal{P}_{e}}\kappa_{f}\right] = \sum_{e \in \Pi}  \frac{1}{|e|+1} < \epsilon \ .
	\end{align*}
	As $\epsilon$ was arbitrary $MaxFlow(0 \rightarrow \infty, \langle \kappa_{e} \rangle) = 0$ a.s..
\end{proof}
\noindent
One can extend Theorem \ref{T11} assuming the slightly weaker condition $\liminf\limits_{\Pi \rightarrow \infty} \sum_{e \in \Pi}  \frac{1}{|e|} < \infty$, but to prove this one first needs to deduce several other lemmas. The proof is given in the Appendix.\\

In Corollary \ref{infgs} we saw that $|\mathcal{G}(J)|$ is either 2 or $\infty$ and this is independent of the specific choice of the coupling distribution $\nu$, as long as $\nu$ is a distribution of linear growth. Below we show that $|\mathcal{G}(J)|$ still depends on the coupling distribution $\nu$. We do this by giving an example of a tree $T=(V,E)$ and i.i.d. random variables $(J_{e})_{e \in E}$ with absolutely continuous distribution $\nu$ with support at 0, such that the simple random walk on $T$ is transient but $MaxFlow(0 \rightarrow \infty, \langle |J_{e}| \rangle$) = 0 almost surely. Let $(I_{e})_{e \in E}$ be i.i.d. uniformly distributed on $(0,1)$. Then $|\mathcal{G}(I)| = \infty $ a.s., but $|\mathcal{G}(J)| = 2$ a.s..\\
\noindent
For the rest of this chapter we assume that $T$ is a spherical symmetric tree such that the simple random walk on $T$ is transient and $(n+1)^{2} \leq |E_{n}| \leq 2(n+1)^{2}$ for every $n\in\N$. To show existence of such a tree, note that if $\theta$ is the flow satisfying $\theta(e) = \frac{1}{|E_{|e|}|}$ for every $e \in E$
\begin{equation*}
\sum_{e \in E} \theta(e)^{2} = \sum_{n=0}^{\infty} \ \sum_{e \in E_{n}} \theta(e)^{2} = \sum_{n=0}^{\infty} |E_{n}| \cdot \frac{1}{|E_{n}|^{2}} \leq \sum_{n=0}^{\infty} \frac{1}{(n+1)^{2}} < \infty
\end{equation*}
which implies transience of the simple random walk on $T$.\\

\noindent
Let $\nu$ be the probability distribution on $\left[0,1\right]$ such that
\begin{equation*}
\nu([0,x]) = \sqrt[3]{x} \ \mbox{ for } 0\leq x \leq 1 \ .
\end{equation*}
The measure $\nu$ is absolutely continuous with respect to the Lebesgue measure and has density
\begin{equation*}
f(s) = 
\begin{cases}
\frac{1}{3} \ s^{-\frac{2}{3}} & \text{for} \ 0<s\leq 1\\
0 & \text{else}
\end{cases} \ .
\end{equation*}

Now we want to investigate how fast $\mathbb{E}\left[\min\limits_{1\leq i \leq n} X_{i}\right]$ tends to 0, when the $X_{i}$ are i.i.d. random variables with distribution $\nu$. To do so note first that for $s\in (0,1) $

\begin{equation*}
\mathbb{P}\left(\min\limits_{1\leq i \leq n} X_{i} > s \right) = \mathbb{P}\left(X_{1} > s\right)^{n} = \left(1-\sqrt[3]{s}\right)^{n} \ .
\end{equation*}
Hence
\begin{equation*}
\mathbb{E}\left[\min\limits_{1\leq i \leq n} X_{i}\right] = \int_{0}^{1} \mathbb{P}\left(\min\limits_{1\leq i \leq n} X_{i} > s\right) ds = \int_{0}^{1} \left(1-\sqrt[3]{s}\right)^{n}ds = \int_{0}^{1} (1-s)^{n}\cdot 3s^{2} ds \ .
\end{equation*}
Now use the following formula, see for example \cite{forrester2008importance}:

\begin{equation*}
\Gamma(\alpha) \Gamma(\beta) = \Gamma(\alpha + \beta)\cdot \int_{0}^{1} s^{\alpha - 1}(1-s)^{\beta - 1} ds
\end{equation*}
where $\Gamma$ is the Gamma function and both $\alpha$ and $\beta$ are positive. This implies

\begin{equation*}
\mathbb{E}\left[\min\limits_{1\leq i \leq n} X_{i}\right] = \frac{6 \Gamma(n+1)}{\Gamma(n+4)} \leq \frac{6}{n^{3}} \ .
\end{equation*}
Using the same steps as in the proof of Theorem \ref{T11} and using that the $(J_{e})_{e \in E}$ are i.i.d. with distribution $\nu$ we get that
\begin{equation*}
\mathbb{E}\left[\mbox{MaxFlow}(0 \rightarrow \infty, \langle J_{e} \rangle)\right] \leq \inf\limits_{\Pi cutset }\sum_{e \in \Pi}  \frac{6}{|e|^{3}} = 0
\end{equation*}
where the last equality follows by considering the cutsets $E_{n}$ and using that $|E_{n}| \leq 2(n+1)^{2}$. Hence, by Theorem \ref{T1} and Theorem \ref{T7}, we have constructed a tree, such that the natural ground states are the only ones, when the couplings have law $\nu$. However, when the couplings are uniformly distributed on the interval $(0,1)$, there are infinitely many by Corollary \ref{infgs}.

\section{Open problems}

We conclude the paper with three open problems: \\ \\
In section 2 we saw that the number of ground states is an almost sure constant for every tree $T$ and every distribution of linear growth and either 2 or $\infty$ by Corollary \ref{infgs}. However, there still are trees and distributions, where $|\mathcal{G}(J)|$ is a non degenerate random variable, see section \ref{3.1}. It remains unsolved, whether there exists a graph $G$ and a distribution $\nu$ satisfying $\nu((-\epsilon,\epsilon))>0 \ \forall \epsilon > 0$, where $|\mathcal{G}(J)|$ is not an a.s. constant or where $|\mathcal{G}(J)| \in \N \setminus \left\{2\right\}$ has positive probability.\\

\noindent
In section 2 we also saw that there is a connection between the number of ground states, percolation and random walks for various distributions. Is there also and connection between those for more general graphs, for example lattices? \\

\noindent
For any tree $T$
\begin{equation}\label{bn}
\inf\limits_{\Pi \ cutset} \ \ \sum_{e \in \Pi} \lambda^{-|e|} = 0 \ \ \forall \lambda > 1
\end{equation}
is a necessary and sufficient condition for ensuring $p_{c} = 1$. Furthermore we saw that 
\begin{equation*}
\inf\limits_{\Pi \ cutset} \ \ \sum_{e \in \Pi} \omega_{|e|} = 0 \ \ \ \forall \ \text{positive sequences} \ (\omega_{n})_{n \in \N} \in \ell^{1}(\N) 
\end{equation*}
is necessary and that
\begin{equation*}
\liminf\limits_{\Pi \rightarrow \infty} \ \ \sum_{e \in \Pi} \frac{1}{|e|} < \infty 
\end{equation*}
is sufficient to ensure recurrence of the simple random walk on the tree. It is still unknown, whether one can find a condition of the same form as (\ref{bn}), which is equivalent to recurrence of the simple random walk.\\

\textbf{Acknowledgements.} I would like to thank Noam Berger for introducing me to the theory of Spin Glasses and for many useful discussions and helpful comments on this paper. The results presented in this paper were part of my Bachelor thesis at TU München written under his supervision. Furthermore, I would like to thank an anonymous referee for many useful comments and remarks. This work is supported by TopMath, the graduate program of the Elite Network of Bavaria and the graduate center of TUM Graduate School.

\section{Appendix}

\begin{lemma}\label{L12}
	Let $(\kappa_{e})_{e \in E}$ be i.i.d. with distribution exp(1) and let $T$ be a tree. Then $MaxFlow(0 \rightarrow \infty, \langle \kappa_{e} \rangle)$ is either $0$ almost surely, or it has unbounded support.
	
\end{lemma}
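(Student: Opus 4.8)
The plan is as follows. Vanishing of the maximal flow is a tail event (since $\nu(\{0\})=0$), so by Kolmogorov's $0$-$1$ law $\mathbb{P}(\mathrm{MaxFlow}(0\to\infty,\langle\kappa_e\rangle)=0)\in\{0,1\}$; in the first case the statement is trivial, so I assume from now on that $X:=\mathrm{MaxFlow}(0\to\infty,\langle\kappa_e\rangle)>0$ a.s., which by Corollary~\ref{C6} means the simple random walk on $T$ is transient. As usual I may delete the finite branches, so that $T_{\succeq z}$ is infinite for every $z$. For $z\in V_n$ write $M_z:=\mathrm{MaxFlow}_{T_{\succeq z}}(z\to\infty,\langle\kappa_e\rangle)$; the $M_z$, $z\in V_n$, are independent, and $\sum_{z\in V_n}M_z\le\sum_{e\in E_n}\kappa_e<\infty$ a.s. since it is a finite sum of exponentials. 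The target is to show $\sup\mathrm{supp}(X)=\infty$.

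The first step is a conditioning argument that lets the independent contributions $M_z$ be added up despite the paths to the various $z\in V_n$ overlapping near the root. For fixed $n$ and $L>0$, on the event $\{\kappa_e\ge L\text{ for all }e\text{ with }|e|\le n-1\}\cap\{\sum_{z\in V_n}M_z\le L\}$ one has $X\ge\sum_{z\in V_n}M_z$: choosing a flow of strength $M_z$ from $z$ to $\infty$ inside $T_{\succeq z}$ for every $z\in V_n$ and sending the total mass $\sum_{z\in V_n}M_z$ up to the root through $T_{\le n}$, every edge $e$ with $|e|\le n-1$ carries at most $\sum_{z\in V_n}M_z\le L\le\kappa_e$, so the result is a feasible flow of that strength. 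Since $\{\kappa_e:|e|\le n-1\}$ and $\{M_z:z\in V_n\}$ depend on disjoint edge sets and hence are independent, with $\mathbb{P}(\kappa_e\ge L\ \forall e:|e|\le n-1)=e^{-L\cdot\#\{e:|e|\le n-1\}}>0$, and since $\sum_{z\in V_n}M_z<\infty$ a.s., it follows that $\mathbb{P}(\sum_{z\in V_n}M_z>t)>0$ implies $\mathbb{P}(X>t)>0$. As $\mathbb{E}[Y]>t$ forces $\mathbb{P}(Y>t)>0$, I conclude $\sup\mathrm{supp}(X)\ge\mathbb{E}\big[\sum_{z\in V_n}M_z\big]$ for every $n$.

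The second step is to show $\mathbb{E}[\sum_{z\in V_n}M_z]\to\infty$. Applying Theorem~\ref{T4} to finite truncations of $T_{\succeq z}$ together with dominated convergence, exactly as in the proof of Corollary~\ref{C6}, gives $\mathbb{E}[M_z]\ge\mathrm{Conduct}_{T_{\succeq z}}(z\to\infty,\langle1\rangle)$, so $\mathbb{E}[\sum_{z\in V_n}M_z]\ge D_n:=\sum_{z\in V_n}\mathrm{Conduct}_{T_{\succeq z}}(z\to\infty,\langle1\rangle)$. To see $D_n\to\infty$, identify all vertices of $V_n$ to a single vertex $S_n$: this only decreases $R_{\mathrm{eff}}(0\leftrightarrow\infty)$, and since $S_n$ now separates $0$ from $\infty$, the series law gives $R_{\mathrm{eff}}(0\leftrightarrow\infty)\ge R_{\mathrm{eff}}(0\leftrightarrow V_n)+1/D_n$, where $1/D_n$ is the resistance from $S_n$ to $\infty$, a parallel combination of the subtrees $T_{\succeq z}$, $z\in V_n$. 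Transience gives $R_{\mathrm{eff}}(0\leftrightarrow\infty)<\infty$, and $R_{\mathrm{eff}}(0\leftrightarrow V_n)\uparrow R_{\mathrm{eff}}(0\leftrightarrow\infty)$, so $1/D_n\le R_{\mathrm{eff}}(0\leftrightarrow\infty)-R_{\mathrm{eff}}(0\leftrightarrow V_n)\to0$. Combining the two steps, $\sup\mathrm{supp}(X)\ge D_n\to\infty$, which is the claim.

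The hard part is the first step. The naive bound, taking a single good child $v$ of the root and using $X\ge\min(\kappa_{(0,v)},M_v)$, only yields ``$M_v$ has unbounded support'', which is the same statement for $T_{\succeq v}$ and hence circular; and one cannot simply sum $\min(\kappa_{\cdot},M_z)$-type bounds over $z\in V_n$ because the paths from $0$ share edges near the root. Forcing the finitely many edges above level $n$ to be large removes exactly this bottleneck, after which the independent masses $M_z$ genuinely add, and the divergence of their total expectation becomes an electrical-network computation; the one slightly delicate ingredient there is the monotone convergence $R_{\mathrm{eff}}(0\leftrightarrow V_n)\uparrow R_{\mathrm{eff}}(0\leftrightarrow\infty)$, which holds precisely because the walk is transient.
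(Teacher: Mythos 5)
Your proof is correct, but it implements the key step by a genuinely different mechanism than the paper. Both arguments rest on transience together with the Lyons--Pemantle--Peres lower bound (\ref{MF1}), and both must neutralize the finitely many edges near the root that bottleneck the flow. The paper does this by a change of measure: it boosts the \emph{means} of the exponentials on a finite edge set $F^{\delta}$ to a large constant $M$ so that the conductance of the modified network exceeds $\delta^{-1}$, applies (\ref{MF1}) to conclude $\mathbb{P}(\mathrm{MaxFlow}>\delta^{-1})>0$ for the modified couplings, and transfers this back to the original $\exp(1)$ couplings via equivalence of the two product laws, which differ only on the finite set $F^{\delta}$. You instead condition on the positive-probability event that the \emph{original} couplings are large on the edges within distance $n$ of the root, and exploit the tree structure to add up the independent subtree max-flows $M_{z}$, $z\in V_{n}$, bounding their total expectation below by $D_{n}=\sum_{z\in V_{n}}\mathrm{Conduct}_{T_{\succeq z}}(z\to\infty,\langle 1\rangle)$. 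Your series-law computation showing $D_{n}\to\infty$ is, in effect, the explicit justification of the step the paper leaves implicit, namely that raising conductances on a suitable finite set drives the effective resistance below any prescribed $\delta$. The two routes are comparable in length; yours avoids the absolute-continuity argument at the price of the conditioning and additivity bookkeeping, and it isolates the purely electrical ingredient, $1/D_{n}\le R_{\mathrm{eff}}(0\leftrightarrow\infty)-R_{\mathrm{eff}}(0\leftrightarrow V_{n})\to 0$, more cleanly. All the delicate points in your write-up (independence of the $M_{z}$ from the near-root couplings, attainment and feasibility of the glued flow, and passing from $\mathbb{E}[\sum_{z}M_{z}]>t$ to $\mathbb{P}(X>t)>0$ via $\sum_{z}M_{z}<\infty$ a.s.) check out.
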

\begin{proof}
	In the same way as we did in the proof of Corollary \ref{C6} we get that
	\begin{equation*}
	\mathbb{E}[\mbox{MaxFlow}(0 \rightarrow \infty, \langle J_{e} \rangle)] \geq
	\mbox{Conduct}(0 \rightarrow \infty, \langle c_{e} \rangle)
	\end{equation*}
	where the $J_{e}$ are independent exponentially distributed with mean $c_{e}$. Now note that we have by Thomson's Principle, see for example (\cite{lyons2017probability}, Chapter 2)
	\begin{equation*}
	\mbox{Resist}(0\rightarrow \infty, \langle c_{e} \rangle) = \min\left\{ \sum_{e \in E} \theta(e)^{2}c_{e}^{-1} : \theta \ \text{unit flow from 0 to} \ \infty \right\} \ .
	\end{equation*}
	So if $Resist(0\rightarrow \infty, \langle 1 \rangle) < \infty$, i.e. if the simple random walk is transient, for every $\delta > 0$ we can find some finite set $F^{\delta} \subset E$ and some $M > 0$ big enough such that 
	\begin{equation}\label{delta}
	\mbox{Resist}(0\rightarrow \infty, \langle C^{M,F^{\delta}}_{e} \rangle) < \delta
	\end{equation}
	where
	\begin{equation*}
	C^{M,F^{\delta}}_{e} =
	\begin{cases}
	M & e \in F^{\delta}\\
	1 & \mbox{else}
	\end{cases} \ .
	\end{equation*}
	Now let $(\kappa_{e})_{e \in E}$ be i.i.d. with law exp(1) and let $(I_{e})_{e \in E}$ be independent exponentially distributed random variables with mean $C^{M,F^{\delta}}_{e}$. From (\ref{delta}) and (\ref{MF1}) we get that
	\begin{equation*}
	\mathbb{E}[\mbox{MaxFlow}(0 \rightarrow \infty, \langle I_{e} \rangle)] \geq 
	\mbox{Conduct}(0\rightarrow \infty, \langle C^{M,F^{\delta}}_{e} \rangle)^{-1} > \delta^{-1}
	\end{equation*}
	and hence
	\begin{equation*}
	\mathbb{P}(\mbox{MaxFlow}(0 \rightarrow \infty, \langle I_{e} \rangle) > \delta^{-1}) > 0 \ .
	\end{equation*}
	As $F^{\delta}$ is finite the laws of $(\kappa_{e})_{e \in E}$ and $(I_{e})_{e \in E}$ are equivalent. Therefore 
	\begin{equation*}
	\mathbb{P}(\mbox{MaxFlow}(0 \rightarrow \infty, \langle \kappa_{e} \rangle) > \delta^{-1}) > 0 \ .
	\end{equation*}
	As $\delta$ was arbitrary we have that $MaxFlow(0 \rightarrow \infty, \langle \kappa_{e} \rangle)$ is unbounded as soon as the simple random walk is transient.
	If the simple random walk is recurrent, we know from Corollary \ref{C6} that $MaxFlow(0 \rightarrow \infty, \langle \kappa_{e} \rangle) = 0$ a.s..
\end{proof}

\begin{lemma}\label{L14}
	Let $T$ be a tree such that $p_{c} = 1$. Let $(\kappa_{e})_{e \in E}$ be i.i.d. random variables with exponential distribution and mean 1. Then
	\begin{equation}\label{liminf}
	\liminf_{\Pi \rightarrow \infty} \sum_{e \in \Pi} \kappa_{e}  = 
	\liminf_{\Pi \rightarrow \infty} \sum_{e \in \Pi} \kappa_{e}^{min}  \ \ \ {\mbox \ a.s.}
	\end{equation}
\end{lemma}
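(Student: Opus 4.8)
The plan is to show both inequalities separately. The direction $\liminf_{\Pi \to \infty} \sum_{e \in \Pi} \kappa_e^{min} \leq \liminf_{\Pi \to \infty} \sum_{e \in \Pi} \kappa_e$ is immediate, since $\kappa_e^{min} \leq \kappa_e$ for every edge $e$ by definition. So the real content is the reverse inequality. The key observation, which I would isolate first, is that a cutset $\Pi$ with small $\sum_{e \in \Pi} \kappa_e^{min}$ can be transformed into a cutset with comparably small $\sum \kappa_e$ by replacing each edge $e \in \Pi$ by the edge $f(e) \in \mathcal{P}_e$ achieving the minimum, i.e.\ with $\kappa_{f(e)} = \kappa_e^{min}$. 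The set $\Pi' := \{f(e) : e \in \Pi\}$ is again a cutset (any infinite self-avoiding ray passing through $e \in \Pi$ also passes through the ancestor $f(e)$), and $\sum_{e' \in \Pi'} \kappa_{e'} \leq \sum_{e \in \Pi} \kappa_{f(e)} = \sum_{e \in \Pi} \kappa_e^{min}$, the inequality coming from the fact that the map $e \mapsto f(e)$ need not be injective, so $\Pi'$ may have fewer edges.

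\textbf{The obstacle} is the $\Pi \to \infty$ aspect: we need the new cutset $\Pi'$ to live deep in the tree as well, i.e.\ $\Pi' \subset E_{\geq n}$ for $n$ large, but $f(e)$ is an \emph{ancestor} of $e$ and could sit much closer to the root. This is exactly where the hypothesis $p_c = 1$ enters. The idea is that since $p_c(T) = 1$, for any $\epsilon > 0$ the random variables $\kappa_e$ are, with probability one, eventually ``large enough along every ray'': more precisely, one can use the standard fact (as in \cite{lyons1992random}) that $p_c = 1$ is equivalent to $\inf_{\Pi} \sum_{e \in \Pi} \lambda^{-|e|} = 0$ for all $\lambda > 1$, which in turn forces that, almost surely, any cutset $\Pi$ with $\sum_{e \in \Pi} \kappa_e^{min}$ small cannot have its minimizing ancestors $f(e)$ bounded in depth — or, said differently, one first passes to a cutset in $E_{\geq n}$ and argues that the minimizing edge $f(e) \in \mathcal{P}_e$ can itself be taken in $E_{\geq n}$ at the cost of a controlled error.

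\textbf{Concretely, the steps I would carry out are:} (1) Fix $n$ and an arbitrary $\delta > 0$; pick a cutset $\Pi \subset E_{\geq n}$ with $\sum_{e \in \Pi} \kappa_e^{min} < \liminf_{\Pi \to \infty}\sum \kappa_e^{min} + \delta$. (2) For each $e \in \Pi$, let $f(e)$ be the edge on $\mathcal{P}_e$ with $\kappa_{f(e)} = \kappa_e^{min}$; if $f(e) \in E_{\geq n}$ keep it, otherwise note that along the portion of $\mathcal{P}_e$ between depth $n$ and $e$ there is an edge $g(e) \in E_{\geq n}$, and $\kappa_{g(e)}$ is an exponential variable independent of the combinatorics chosen so far. (3) Use a Borel--Cantelli / first-moment argument analogous to the one in the proof of Theorem~\ref{T11} to show that, almost surely, for $n$ large the total ``correction'' $\sum_{e}(\kappa_{g(e)} - \kappa_e^{min})$ incurred by replacing out-of-range minimizers can be made smaller than $\delta$ — here the $p_c = 1$ hypothesis guarantees the relevant sums of capacities over deep cutsets vanish, so the expected correction over any deep cutset is controlled. (4) Conclude that $\Pi'' := \{f(e) : f(e) \in E_{\geq n}\} \cup \{g(e) : f(e) \notin E_{\geq n}\}$ is a cutset in $E_{\geq n}$ with $\sum_{e'' \in \Pi''}\kappa_{e''} \leq \sum_{e \in \Pi}\kappa_e^{min} + \delta$, so $\inf\{\sum_{e} \kappa_e : \Pi \subset E_{\geq n}\} \leq \liminf_{\Pi \to \infty}\sum \kappa_e^{min} + 2\delta$; letting $n \to \infty$ and $\delta \to 0$ gives the reverse inequality. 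The main subtlety will be making the correction estimate in step (3) uniform enough in the choice of $\Pi$, which is why $p_c=1$ (equivalently, the vanishing of $\inf_\Pi \sum_e \lambda^{-|e|}$) rather than mere recurrence is assumed here.
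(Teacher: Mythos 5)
Your opening moves coincide with the paper's: the inequality $\liminf_{\Pi\to\infty}\sum_{e\in\Pi}\kappa_e^{min}\leq\liminf_{\Pi\to\infty}\sum_{e\in\Pi}\kappa_e$ is immediate, and for the converse you pass from $\Pi$ to the image cutset $\{f(e):e\in\Pi\}$ under the minimizing-ancestor map, whose $\kappa$-sum is at most $\sum_{e\in\Pi}\kappa_e^{min}$. You also correctly isolate the obstacle: the image cutset need not lie in $E_{\geq n}$. The gap is in your step (3). First, the premise that ``$p_c=1$ guarantees the relevant sums of capacities over deep cutsets vanish'' is false: $p_c=1$ only gives $\inf_\Pi\sum_{e\in\Pi}\lambda^{-|e|}=0$ for $\lambda>1$, and the paper's Section \ref{3.2} exhibits a spherically symmetric tree with $|E_n|$ of order $n^2$, hence $p_c=1$, on which the maximal flow for i.i.d.\ $\exp(1)$ capacities is positive, so $\inf_\Pi\sum_{e\in\Pi}\kappa_e>0$ there. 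Second, the replacement edges $g(e)$ are generic mean-one exponentials and there may be arbitrarily many of them, so the correction $\sum_e(\kappa_{g(e)}-\kappa_e^{min})$ has no reason to be small. Third, your near-optimal cutset $\Pi$ is chosen as a function of the $\kappa$'s, so the edge-by-edge first-moment bound of Theorem \ref{T11} (which applies to a \emph{deterministic} cutset) cannot be invoked; the uniformity you flag as a ``subtlety'' is exactly where the argument fails.

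The paper sidesteps all quantitative estimates with a purely qualitative use of $p_c=1$: if the image cutsets $\Xi(\Pi_n)$ failed to escape to infinity while $\Pi_n\to\infty$, some fixed edge $e$ would satisfy $|\Xi^{-1}(e)|=\infty$; but $\Xi^{-1}(e)$ is a connected subtree above $e$ all of whose edges $f$ have $\kappa_f$ exceeding $\kappa_e$, and conditionally on $\kappa_e$ such an infinite subtree is an infinite cluster for Bernoulli percolation at parameter $e^{-\kappa_e}<1$, which has probability $0$ since $p_c=1$. Hence a.s.\ every edge is the minimizer for only finitely many descendants, so for $\Pi$ deep enough the image cutset is automatically deep and no correction term is needed. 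This is the statement you should prove in place of your step (3) if you want to complete the argument along your lines.
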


\begin{proof}
	The greater or equal in (\ref{liminf}) is clearly true, as $\kappa_{e}^{min} \leq \kappa_{e}$. For the other direction let $M \in \R \cup \infty$ be such that $\liminf_{\Pi \rightarrow \infty} \sum_{e \in \Pi} \kappa_{e}^{min} \leq M$. Now define the function $\Xi: E \rightarrow E$ by
	\begin{equation*}
	e \mapsto \arg\min\left\{\kappa_{f} : f \in \mathcal{P}_{e} \right\}
	\end{equation*}
	$\Xi(e)$ is in general not uniquely defined, so among all minimizers we will always take the one which is closest to $e$ and hence furthest from the root. For some cutset $\Pi$ the set $\Pi^{min} \coloneqq \Xi(\Pi)$ is also a cutset and
	\begin{equation*}
	\sum_{e \in \Pi^{min}} \kappa_{e} \leq  \sum_{e \in \Pi} \kappa_{e}^{min} \ .
	\end{equation*}
	Note that we do not have equality in general, as $\Xi$ does not have to be injective. So we have to show that $\Pi^{min}\rightarrow \infty$, a soon as $\Pi \rightarrow \infty$. Therefore assume that we have a sequence of cutsets $\Pi_{n} \rightarrow \infty$, but $\Pi_{n}^{min}\nrightarrow \infty$. This is equivalent to the existence of some $e \in E$ such that $|\Xi^{-1}(e)| = \infty$. Consider the tree $G = (V_{e},\Xi^{-1}(e))$, where $V_{e}$ is the set of vertices which are adjacent to at least one edge in $\Xi^{-1}(e)$. This is an infinite subtree of $T$ satisfying $\kappa_{f}<\kappa_{e} \ \forall f \in \Xi^{-1}(e)$. Such a tree occurs with probability 0, as $p_{c} = 1$, hence we get that there exists almost surely no $e \in E$ such that $|\Xi^{-1}(e)| = \infty$. This implies that $\Pi_{n}^{min}\rightarrow \infty$ a.s. and $\liminf_{\Pi \rightarrow \infty} \sum_{e \in \Pi} \kappa_{e} \leq M$ almost surely. As $M$ was arbitrary, we directly get (\ref{liminf}).
\end{proof}

The left-hand side of (\ref{liminf}) is constant almost surely by Kolmogorov's 0-1-law. Lemma \ref{L14} directly implies that the right-hand side is also an almost sure constant. In the proof of the desired theorem we use the concept of {\sl branching numbers}:\\

Let $T$ be a tree with root 0. Define the branching number $br T$ of $T$ by
\begin{equation*}
br T \coloneqq \inf\left\{\lambda>0 : \inf\limits_{\Pi \ cutset} \ \sum_{e \in \Pi} \lambda^{-|e|} = 0 \right\} \ .
\end{equation*}
\noindent
There is a connection between bond percolation and the branching number of a tree, which is due to R. Lyons, see \cite{lyons1992random} for details. Let $p_{c}$ be the critical probability for bond percolation on the tree $T$. Then
\begin{equation*}
p_{c} = \frac{1}{br T} \  .
\end{equation*}

\begin{theorem}\label{T15}
	Let T be a tree such that 
	\begin{equation}\label{bdd}
	\liminf\limits_{ \Pi \rightarrow \infty} \sum_{e \in \Pi} \frac{1}{|e|} < \infty \ .
	\end{equation}
	Then the simple random walk on $T$ is recurrent.
\end{theorem}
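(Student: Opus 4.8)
The plan is to reformulate recurrence as the vanishing of a random maximal flow and then combine the two structural lemmas of this appendix, Lemma~\ref{L12} and Lemma~\ref{L14}, with Corollary~\ref{C6}. First I would pass to the backbone of $T$ exactly as in Section~\ref{2.4}: removing the finite branches changes neither $\liminf_{\Pi\rightarrow\infty}\sum_{e\in\Pi}\frac{1}{|e|}$ (a cutset restricted to the backbone is still a cutset and the level of a backbone edge does not change), nor the maximal flow, nor recurrence of the simple random walk. So I assume $T$ has no finite branches and write $L\coloneqq\liminf_{\Pi\rightarrow\infty}\sum_{e\in\Pi}\frac{1}{|e|}<\infty$.

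The first step is to deduce $p_{c}(T)=1$, which is what allows one to invoke Lemma~\ref{L14}. Fix $\lambda>1$, pick $\mu\in(1,\lambda)$, and set $C_{\mu}\coloneqq\sup_{m\ge1}m\,\mu^{-m}<\infty$, so that $\mu^{-|e|}\le C_{\mu}/|e|$ for every edge. For a cutset $\Pi\subset E_{\ge n}$ every $e\in\Pi$ has $|e|\ge n$, hence $\lambda^{-|e|}=\mu^{-|e|}(\lambda/\mu)^{-|e|}\le C_{\mu}(\lambda/\mu)^{-n}/|e|$, and therefore
\[
\inf\Big\{\sum_{e\in\Pi}\lambda^{-|e|}:\Pi\text{ cutset},\ \Pi\subset E_{\ge n}\Big\}\ \le\ C_{\mu}\,(\lambda/\mu)^{-n}\,L ,
\]
which tends to $0$ as $n\to\infty$. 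Thus $\inf_{\Pi}\sum_{e\in\Pi}\lambda^{-|e|}=0$ for every $\lambda>1$, so $\mathrm{br}(T)\le 1$, hence $\mathrm{br}(T)=1$ and $p_{c}(T)=1$.

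Next, let $(\kappa_{e})_{e\in E}$ be i.i.d.\ exponential with mean $1$ and put $c\coloneqq\liminf_{\Pi\rightarrow\infty}\sum_{e\in\Pi}\kappa_{e}$. Since for each $n$ the infimum over cutsets $\Pi\subset E_{\ge n}$ depends only on $(\kappa_{e})_{e\in E_{\ge n}}$, the quantity $c$ is an almost sure constant by Kolmogorov's $0$--$1$ law; and by Lemma~\ref{L14}, which applies because $p_{c}(T)=1$, we also have $c=\liminf_{\Pi\rightarrow\infty}\sum_{e\in\Pi}\kappa^{min}_{e}$ a.s. For each $n$ choose a cutset $\Pi_{n}\subset E_{\ge n}$ with $\sum_{e\in\Pi_{n}}\frac{1}{|e|}\le L+1=:K$; then $c\le\liminf_{n}\sum_{e\in\Pi_{n}}\kappa^{min}_{e}$, and since $\mathbb{E}[\kappa^{min}_{e}]=\tfrac1{|e|+1}$, Fatou's lemma gives
\[
c\ \le\ \mathbb{E}\Big[\liminf_{n}\sum_{e\in\Pi_{n}}\kappa^{min}_{e}\Big]\ \le\ \liminf_{n}\sum_{e\in\Pi_{n}}\frac{1}{|e|+1}\ \le\ K\ <\ \infty .
\]
On the other hand, the Max-Flow Min-Cut identity~(\ref{MFMC}) applied to the capacities $\kappa_{e}$ yields
\[
\mathrm{MaxFlow}(0\rightarrow\infty,\langle\kappa_{e}\rangle)=\inf_{\Pi}\sum_{e\in\Pi}\kappa_{e}\ \le\ \liminf_{\Pi\rightarrow\infty}\sum_{e\in\Pi}\kappa_{e}=c\ \le\ K\qquad\text{a.s.},
\]
so $\mathrm{MaxFlow}(0\rightarrow\infty,\langle\kappa_{e}\rangle)$ is an almost surely bounded random variable. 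Lemma~\ref{L12} then forces $\mathrm{MaxFlow}(0\rightarrow\infty,\langle\kappa_{e}\rangle)=0$ a.s., and Corollary~\ref{C6} yields recurrence of the simple random walk on $T$.

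The step I expect to be the real heart of the matter — and the reason Lemmas~\ref{L12} and~\ref{L14} are needed at all — is the passage from ``$\mathrm{MaxFlow}(0\rightarrow\infty,\langle\kappa_{e}\rangle)$ is a.s.\ finite'', which is immediate from local finiteness, to ``$\mathrm{MaxFlow}(0\rightarrow\infty,\langle\kappa_{e}\rangle)$ is a.s.\ bounded by a deterministic constant'', which is exactly what Lemma~\ref{L12} converts into vanishing. Switching from $\kappa_{e}$ to $\kappa^{min}_{e}$ is what makes the relevant cutset sums have small, explicitly computable expectations, while Lemma~\ref{L14} is precisely the tool that transports this expectation bound back onto the tail-measurable, hence a.s.\ constant, quantity $c$. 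The other ingredients — the $p_{c}=1$ computation, the interchange of $\liminf$ with $\mathbb{E}$, and the identifications of the various infima over cutsets — should be routine.
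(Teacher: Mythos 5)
Your proof is correct and follows essentially the same route as the paper: deduce $p_{c}=1$ from (\ref{bdd}), pass to the capacities $\kappa^{min}_{e}$ and use Lemma~\ref{L14} together with tail-triviality to bound $\liminf_{\Pi\rightarrow\infty}\sum_{e\in\Pi}\kappa_{e}$ by a deterministic constant, and then conclude via Lemma~\ref{L12} and Corollary~\ref{C6}. The only (harmless) deviations are that you obtain the deterministic bound by Fatou's lemma from $\mathbb{E}[\kappa^{min}_{e}]=\frac{1}{|e|+1}$ where the paper instead uses Markov's inequality to show $\mathbb{P}\bigl(\sum_{e\in\Pi_{n}}\kappa^{min}_{e}\le 2C\bigr)\ge\tfrac12$ infinitely often, and that you supply the short computation behind the paper's unproved assertion that (\ref{bdd}) forces $\mathrm{br}\,T=1$.
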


\begin{proof}
	First note that (\ref{bdd}) implies $p_{c} = brT^{-1} = 1$. Let $(\kappa_{e})_{e \in E}$ be i.i.d. exponentially distributed with mean 1. Note that for every positive random variable $X$ and any constant $M \geq \mathbb{E}[X]$
	\begin{equation*}
	\mathbb{P}(X \leq 2M) \geq \frac{1}{2} \ .
	\end{equation*}
	So for any $C \in \R$ and any sequence of cutsets $\Pi_{n}$ satisfying $\Pi_{n} \rightarrow \infty$ for $n \rightarrow \infty$ and $\sum_{e \in \Pi_{n}} \frac{1}{|e|} \leq C \ $ for every $n\in\N$ 
	\begin{align*}
	\mathbb{P}\left(\sum_{e \in \Pi_{n}} \kappa^{min}_{e} \leq 2C \ \text{for infinitely many }n\right) &= \mathbb{P}\left(\ \bigcap_ {k=1}^{\infty} \ \bigcup_{n=k}^{\infty} \ \left\{\sum_{e \in \Pi_{n}} \kappa^{min}_{e} \leq 2C \right\} \right) \\
	& \geq \limsup_{n\rightarrow \infty}\mathbb{P}\left(\sum_{e \in \Pi_{n}} \kappa^{min}_{e} \leq 2C \right) \geq \frac{1}{2} \ .
	\end{align*}
	As $\mathbb{P}\left(\liminf\limits_{ \Pi \rightarrow \infty} \sum_{e \in \Pi} \kappa_{e}^{min} \leq M\right) \in \left\{0,1\right\}$ for any $M\in \R$ applying Lemma \ref{L14} yields
	\begin{equation*}
	\mathbb{P}\left(\liminf\limits_{ \Pi \rightarrow \infty} \sum_{e \in \Pi} \kappa_{e} \leq 2C\right) = 1 \ .
	\end{equation*}
	But this implies that $MaxFlow(0 \rightarrow \infty, \langle \kappa_{e} \rangle)$ is bounded by $2C$. By Lemma \ref{L12} we obtain $MaxFlow(0 \rightarrow \infty, \langle \kappa_{e} \rangle) = 0$ almost surely. Therefore the simple random walk on $T$ is recurrent, by Corollary \ref{C6}.\\
\end{proof}

\end{document}